\def\defeq{:=}
\def\ff{{\mathfrak f}}
\numberwithin{equation}{section}
\def\sP{{\mathscr P}}
\def\sO{{\mathscr O}}
\def\sM{{\mathscr M}}
\def\sL{{\mathscr L}}
\def\sO{\mathscr{O}}
\newcommand{\CC}{\mathbb{C}}
\newcommand{\EE}{\mathbb{E}}
\newcommand{\LL}{\mathbb{L}}
\newcommand{\PP}{\mathbb{P}}
\newcommand{\QQ}{\mathbb{Q}}
\newcommand{\VV}{\mathbb{V}}
\newcommand{\TT}{\mathbb{T}}
\def\fR{{\mathfrak R}}
\def\sP{{\mathscr P}}
\def\Tot{\mathrm{Tot}}
\def\gst{\text{gst}}
\def\cA{{\mathcal A}}
\newcommand{\cal}{\mathcal}
\def\cA{{\cal A}}
\def\cB{{\cal B}}
\def\cC{{\cal C}}
\def\cD{{\cal D}}
\def\cE{{\cal E}}
\def\cH{{\cal H}}
\def\cL{{\cal L}}
\def\cO{{\cal O}}
\def\cP{{\cal P}}
\def\cU{{\cal U}}
\def\cV{{\cal V}}
\def\cX{{\cal X}}
\def\cY{{\cal Y}}
\def\cZ{{\cal Z}}
\def\fC{\mathfrak{C}}
\def\ff{\mathfrak{f}}
\def\mapright#1{\,\smash{\mathop{\lra}\limits^{#1}}\,}
\def\dual{^{\vee}}
\def\sta{^\ast}
\def\virt{^{\mathrm{vir}}}
\def\sta{^{\ast}}
\def\sta{^*}
\def\lra{\longrightarrow}
\def\lpri{_{\mathrm{pri}}}
\def\lsta{_{\ast}}
\newcommand{\si}{\sigma}
\def\begeq{\begin{equation}}
\def\endeq{\end{equation}}
\def\and{\quad{\rm and}\quad}
\def\bl{\bigl(}
\def\br{\bigr)}
\def\defeq{:=}
\def\sub{\subset}
\def\and{\quad\text{and}\quad}
\DeclareMathOperator{\image}{Im}
 \DeclareMathOperator{\rank}{rank}
 \DeclareMathOperator{\rede}{{red}}
\newtheorem{prop}{Proposition}[section]
\newtheorem{theo}[prop]{Theorem}
\newtheorem{lemm}[prop]{Lemma}
\newtheorem{coro}[prop]{Corollary}
\newtheorem{rema}[prop]{Remark}
\newtheorem{defi}[prop]{Definition}
\newtheorem{defi-prop}[prop]{Definition-Proposition}
\def\Ob{\cO b}
\def\lloc{_{\mathrm{loc}}}
\def\loc{{\mathrm{loc}}}
\def\bul{^\bullet}
\def\sta{^\ast}
\def\image{\text{Im}\,}
\def\sO{{\mathscr O}}
\def\beq{\begin{equation}}
\def\eeq{\end{equation}}
\def\vsp{\vskip5pt}
\def\fM{{\mathfrak M}}
\def\bee{\begin{equation}}
\def\eeq{\end{equation}}
\def\ti{\tilde}
\def\LL{{\mathbb L}}
\def\lgst{_{\rm gst}}
\def\gst{{\rm{gst}}}
\newcommand{\wtil}{\widetilde}
\newcommand{\Gysin}[1]{0_{#1}^!}
\newcommand{\vphi}{\varphi}
\title[]{Splitting of the virtual class for genus one stable quasimaps
}
\date{}
\author{Sanghyeon Lee}
\address{Korea Institute for Advanced Study(KIAS), 85 Hoegiro, Dongdaemun-gu, Seoul 02455, Republic of Korea}
\email{sanghyeon@kias.re.kr}
\author[Mu-Lin Li]{Mu-Lin Li}
\address{School of Mathematics, Hunan University, China} \email{mulin@hnu.edu.cn}
\begin{document}

\maketitle
\setcounter{tocdepth}{2}

\begin{abstract}
We analyse the local structure of moduli space of genus one stable quasimaps. Combining it with the p-fields theory developed in \cite{L}, we prove the splitting formula for the virtual cycle of stable quasimaps to complete intersections in $\PP^n$.
\end{abstract}
\tableofcontents

\section{Introduction}

The moduli space of stable quasimaps to arbitrary GIT quotient is a generalization of the moduli space of stable quotient defined by Marian, Oprea and Pandharipande \cite{MOP}, which was constructed and studied by Ciocan-Fontanine, Kim  and Maulik \cite{FKM}.
When the target is a projective complete intersection, Ciocan-Fontanine and Kim \cite{FK2} proved that the invariants of stable quasimaps can be related to the Gromov-Witten invariants by mirror map for all genus (see also \cite{FK}, \cite{FK0}, \cite{Zh} for different cases, and \cite{CJR1}, \cite{CJR2} for different proofs). The genus zero stable quasimap (stable quotient) invariants of complete intersections are computed by Cooper and Zinger \cite{CZ}, and Ciocan-Fontanine and Kim \cite{FK}. Kim and Lho \cite{Kiml} calculate the genus one invariants of complete intersection without markings by using infinitesimal marked points.

Let $X=(q_1(x)=\cdots=q_m(x)=0)\subset \PP^n$ be a smooth complete intersection. Let $Q_{g,k}(X,d)$ be the moduli stack of genus $g$ stable quasimaps to $X$ with degree $d$  and $k$ markings. It is a
proper Deligne Mumford (DM for short)-stack, and carries a canonical virtual cycle $[Q_{g,k}(X,d))]\virt$.

Especially, for $k=1$,
$X=\PP^n$ case, $\cX:=Q_{1,1}(\PP^n,d)$ has two smooth components by Theorem \ref{Main3}. One is the main component $\cX_{\rede}$,  and the other component is  the ghost component $\cX_{\gst}$. Let $\pi_{\cX}: \cC_{\cX}\to \cX$ be the universal family, and $\sL_{\cX}$ be the universal line bundle over $\cC_{\cX}$. Then the restriction $\pi_{\cX*}\sL^{\otimes r}_{\cX}|_{\cX_{\rede}}$ is locally free for all all positive integers $r$.
In \eqref{red-def}, we define the reduced virtual cycle $A^{\rede}_{1,d}$ by the refined euler class of the bundle $\pi_{\cX*}\sL^{\otimes r}_{\cX}|_{  \cX_{\rede} }$.
Then we have the following splitting formula for virtual cycle.
\begin{theo}\label{main-0} Let $X=(q_1(x)=\cdots=q_m(x)=0)\subset \PP^n$ be a smooth complete intersection, then
\begin{eqnarray*}
[Q_{1,1}(X,d))]^{\virt}&=& A^{\rede }_{1,d}+ \\
&& (-1)^{(\sum_i\deg q_i)d} \, \iota_* \left( \frac{c(\cH\dual \boxtimes ev_1^* T_X)  }{c(\cH\dual \boxtimes L_2) } \right)_{n - m-1} \cap \left( [\overline{M}_{1,1}] \times [Q_{0,2}(X,d)]\virt \right)
\end{eqnarray*}
where $\iota : \overline{M}_{1,1} \times Q_{0,2}(X,d) \to Q_{1,1}(X,d)$ is the node-identifying morphism, $\cH$ is the Hodge bundle over $\overline{M}_{1,1}$,  $L_2$ is the universal tangent bundle over $Q_{0,2}(\PP^n,d)$ at the second marked point, which comes from splitting of the node and $A^{\rede }_{1,d}$ is the reduced virtual cycle defined by (\ref{red-def}).
\end{theo}

Let $\psi$ be the psi-class of $Q_{1,1}(X,d)$ at the marked point. For $\gamma \in H^{2k}(X,\QQ)$, $k \le 1$, we can define the following stable quasimap invariants
$$
\langle\psi^{a}ev^*\gamma\rangle_{1,1,d}:=\int_{[Q_{1,1}(X,d)]^{\text{vir}}}\psi^{a}ev^*\gamma,
$$
when $a+k=\text{vdim}\, Q_{1,1}(X,d)$.

The reduced genus one invariants of stable quasimaps to smooth complete intersection $X\subset\PP^n$ is defined as follows
\begin{defi}
\beq\label{def-red}
\langle\psi^{a}ev^*\gamma\rangle^{\rede}_{1,1,d}:=\int_{A_{1,d}^{\rede}} \psi^{a}ev^*\gamma.
\eeq
\end{defi}

Then we prove the following equality as formula (\ref{form-2}) in the paper,
\beq\label{def-red-2}
\langle\psi^{a}ev^*\gamma\rangle^{\rede}_{1,1,d}=\int_{\cX_{\rede}}\psi^{a}ev^*\gamma\cup e^{\mathrm{ref}}\left( \oplus_{i=1}^m\pi_{\cX*}\sL^{\otimes \deg q_i}_{\cX}|_{\cX_{\rede}} \right).
\eeq

This reduced invariants can be calculated by using the localization formula similarly as Zinger \cite{Zin1} did in genus one Gromov-Witten invariants, and as the second author \cite{MLL} did in genus one stable quasimap invariants without marking. We have the following formula which connect the reduced and standard stable quasimap invariants for complete intersections
\begin{coro}\label{coro-1}
Let $X=(q_1(x)=\cdots=q_m(x)=0)\subset \PP^n$ be a smooth complete intersection. For $\gamma \in H^{2k}(X,\QQ)$ where $k \le 1$, we have
\begin{eqnarray*}
\langle\psi^{a}ev^*\gamma \rangle_{1,1,d}&=&\langle\psi^{a}ev^*\gamma \rangle^{\rede}_{1,1,d}-\frac{1}{24}\bigg(\int_{Q_{0,2}(X,d)}\psi^{a}ev_1^*\gamma \cup c_{n-m-2}(ev_2^* T_X)\\
&&-(n-m-1)\int_{Q_{0,2}(X,d)}\psi^{a}ev^*\gamma \, \psi_2^{n-m-2}\bigg),
\end{eqnarray*}
where $a+k=\text{vdim}\, Q_{1,1}(X,d)$. Furthermore, if $X$ is a Calabi-Yau threefold, then $c_1(T_X)=0$, and
\begin{eqnarray}
\langle\psi^{a}ev^*\gamma\rangle_{1,1,d}&=&\langle\psi^{a}ev^*\gamma\rangle^{\rede}_{1,1,d}+\frac{1}{12}\int_{Q_{0,2}(X,d)}\psi^{a}ev^*\gamma \, \psi_2. \nonumber
\end{eqnarray}
\end{coro}
The term $\langle\psi_1\rangle^{\rede}_{1,1,d}$ plays an important role in Oh and the authors' splitting formula \cite{LLO} for genus two stable quasimap invariant of complete intersection Calabi-Yau threefolds in $\PP^n$. Thus this paper can be seen as the first step in our approach to the calculation of genus two stable quasimap invariants.
%
%

\medskip

\noindent{\bf Acknowledgment}:   The second author thanks Huai-Liang Chang, Bumsig Kim,  Jun Li, and A. Zinger  for helpful discussions. This work was supported by the Start-up Fund of Hunan University. The first author thanks Jeongseok Oh for helpful discussions. This work was supported by a KIAS Individual Grant MG070902 at Korea Institute for Advanced Study.

\section{Local charts and local equations}\label{sect:local}

\subsection{Relative obstruction theories of quasi-map spaces}\label{Sect:relPOTs}

Here we introduce relative perfect obstruction theories of the quasi-map space $Q_{1,k}(\PP^n,d)$ and the quasi-map space with fields $Q_{1,k}(\PP^n,d)^p$.
We introduce some Artin stacks, which will be used as bases of the relative perfect obstruction theories. Let $\fM_{1,k}$ be the Artin stack of nodal curves of
genus one with $k$-markings.




\begin{defi}\label{defstack1}
Let $\, \fM^{wt}_{1,k,d}$ be the groupoid associating each scheme $S$ to the set $\, \fM^{wt}_{1,k,d}(S) = (\mathcal{C}_S, \{p_j:S\to\cC_S\}_{j=1}^k)$ where $(\pi:\mathcal{C}_S \rightarrow S,\chi)$ is a flat family of prestable genus one weighted nodal curves with $k$ marked points. We will usually abbreviate it by $\fM^{wt}_{1,k}$.
\end{defi}

\begin{defi}\label{defstack}
Let $\, \fM^{line}_{1,k}$ be the groupoid associating each scheme $S$ to the set $\, \fM^{line}_{1,k}(S)=(\mathcal{C}_S,\{p_j:S\to\cC\}_{j=1}^k,\mathscr{L})$, where $\pi:\mathcal{C}_S \rightarrow S$ is a flat family of connected genus one nodal curves
and $\{\mathscr{L}\}$ is a line bundle on $\mathcal{C}_S$ of degree $d$ along fibers of $\mathcal{C}_S/S$. An arrow from
$(\mathcal{C}_S,\{p_j:S\to\cC_S\}_{j=1}^k,\sL)$ to $ (\mathcal{C}^{\prime}_S,\{p^{\prime}_j:S\to\cC^{\prime}\}_{i=1}^k,\mathscr{L}')$ consists of $f:\mathcal{C}_S\rightarrow\mathcal{C}^{\prime}_S$ and an isomorphism $\theta_f:f^{*}\mathscr{L}^{\prime}\rightarrow \mathscr{L}$, which preserve the markings and the sections.
\end{defi}

Let $(C,\{p_j\}_{j=1}^k, D)$ be the  $k$-pointed (connected) nodal
elliptic curves $C$ with effective divisors $D\subset C$ supported on the smooth loci of $C$.
Then $(C,\{p_j\}_{j=1}^k, D)$ is stable if the induced weighted nodal curve $(C,\{p_j\}_{j=1}^k, \deg D)$ is stable.

\begin{defi}\label{defstack2}
Let $\, \fM_{1,k,d}^{div}$ be the groupoid associating each scheme $S$ to the set $\, \fM_{1,k,d}^{div}(S) = (\mathcal{C}_S,\{p_j:S\to\cC\}_{j=1}^k,\mathscr{D})$, where $\pi:\mathcal{C}_S \rightarrow S$ is a flat family of connected stable genus one nodal curves and $\mathscr{D}$ is an effective divisor on $\mathcal{C}_S$ whose degree is $d$ on each fiber.
\end{defi}
Note that $\fM_{1,k}$, $\fM^{wt}_{1,k,d}$, $\fM^{line}_{1,k,d}$ and $\fM_{1,k,d}^{div}$ are smooth Artin stacks. The morphism $\fM_{1,k,d}^{div} \to \fM^{wt}_{1,k,d}$ is smooth and proper with connected fibers, and the morphism $\fM^{wt}_{1,k}\to \fM_{1,k}$ is \'etale.
The natural (dual) relative obstruction theory of $Q_{1,k}(\PP^n,d)$ over $\fM^{line}_{1,k}$ is defined by
\begin{align}\label{eq:relPOT1}
\mathbb{E}\dual_{Q_{1,k}(\PP^n,d) / \fM^{line}_{1,k}} := R\pi_* \sL_{\cC}^{\oplus n},
\end{align}
where $\pi : \cC \to Q_{1,k}(\PP^n,d)$ is the universal curve and $\sL_{\cC}$ is the universal bundle over $\cC$, which coincides with the pull-back of the universal bundle $\sL$ over $\fM^{line}_{1,k}$ via the forgetful morphism $\mathfrak{f}:Q_{1,k}(\PP^n,d) \to \fM^{line}_{1,k}$.

Next we consider a relative obstruction theory of $Q_{1,k}(\PP^n,d)$ over $\fM_{1,k}$.
The morphism $\fM^{line}_{1,k} \to \fM^{wt}_{1,k}$ is given by associating $(C,L)$ to the weight on $C$, given by the degree of the line bundle $L$ restricted on each irreducible component of $C$. Note that this morphism is smooth.
Hence the morphism $\fM^{line}_{1,k} \to \fM_{1,k}$ is smooth. Hence there is a natural relative obstruction theory $\EE_{ Q_{1,k}(\PP^n,d) / \fM_{1,k} }$ to $  \LL_{ Q_{1,k}(\PP^n,d) / \fM_{1,k} }$, which is induced from the relative obstruction theory $\EE_{ Q_{1,k}(\PP^n,d) / \fM^{line}_{1,k} } \to \LL_{ Q_{1,k}(\PP^n,d) / \fM^{line}_{1,k} }$ \cite[Proposition 7.2]{BF}.

From the definition of relative obstruction theories and octahedral axiom of derived categories, there is a natural distinguished triangle:
\begin{align*}
& \mathfrak{f}^*\TT_{\fM^{line}_{1,k} / \fM_{1,k} }[-1] \to \EE\dual_{Q_{1,k}(\PP^n,d) / \fM^{line}_{1,k} } \to \EE\dual_{Q_{1,k}(\PP^n,d) / \fM_{1,k} }\stackrel{+1}{\lra}
\end{align*}

which fits in to the commutative diagram of distinguished triangles:
\begin{align}\label{eq:POTdiag1}
\xymatrix{ \mathfrak{f}^*\TT_{\fM^{line}_{1,k} / \fM_{1,k} }[-1] \ar[r]^-{\varphi} &\EE\dual_{Q_{1,k}(\PP^n,d) / \fM^{line}_{1,k} } \ar[r] &\EE\dual_{Q_{1,k}(\PP^n,d) / \fM_{1,k} } \ar[r]^-{+1} &  \\
\mathfrak{f}^*\TT_{\fM^{line}_{1,k} / \fM_{1,k} }[-1] \ar[r]^-{a} \ar@{=}[u]& \TT_{Q_{1,k}(\PP^n,d) / \fM^{line}_{1,k} } \ar[r] \ar[u]^-{b} & \TT_{Q_{1,k}(\PP^n,d) / \fM_{1,k} }\ar[r]^-{+1} \ar[u] &
}
\end{align}

On the other hand, in a similar manner as in  \cite[Lemma 2.8]{CL} we have the following commutative diagram of distinguished triangles:

\begin{align}\label{eq:POTdiag2}
\xymatrix{ R^\bullet\pi_* \cO_{Q_{1,k}(\PP^n,d)} \ar[r]^-{\varphi'} & R^\bullet\pi_* \sL_{\cC}^{\oplus n+1} = \EE\dual_{Q_{1,k}(\PP^n,d) / \fM^{line}_{1,k} } \ar[r] & R^\bullet\pi_* f^* T_{ [\CC^{n+1} / \CC^*] } \ar[r]^-{+1} &  \\
\mathfrak{f}^*\TT_{\fM^{line}_{1,k} / \fM_{1,k} }[-1] \ar[r]^-{a} \ar[u]^-{\cong} & \TT_{Q_{1,k}(\PP^n,d) / \fM^{line}_{1,k} } \ar[r] \ar[u]^-{b} & \TT_{Q_{1,k}(\PP^n,d) / \fM_{1,k} }\ar[r]^-{+1} \ar[u] &
}
\end{align}
where $\pi : \cC_{Q_{1,k}(\PP^n,d)} \to Q_{1,k}(\PP^n,d)$ is the universal curve and $\sL_{\cC}$ is the universal bundle over $\cC_{Q_{1,k}(\PP^n,d)}$, $f : \cC_{Q_{1,k}(\PP^n,d)} \to [\CC^{n+1}/\CC^*]$ is a universal morphism induced from the universal section $(u_0,\dots,u_n)$ of $\sL_{\cC}^{n+1}$. Note that the (pull-back of) the tangent complex $T_{\CC^{n+1} / \CC^*}$ of the quotient stack is the complex
\[
\xymatrix@C=45pt{
\cO_{\CC^{n+1}} \ar[r]^-{  (x_0,\dots,x_{n} ) } & \cO_{\CC^{n+1}}^{n+1}
}
\]
where $x_0,\dots x_{n}$ is the coordinate functions of $\CC^{n+1}$. Note that the distinguished triangle on the first horizontal arrow is obtained from the exact sequence
\[
\xymatrix{
0 \ar[r] & \cO_{Q_{1,k}(\PP^n,d) } \ar[rr]^-{ (u_0,\dots,u_{n})  } & & \sL_{\cC}^{n+1} \cong \sL_{\cC} \times \CC^{n+1} \ar[r] & f^*T_{[\CC^{n+1} / \CC^*] } \ar[r] & 0
}
\]
by taking the pull-back and the pushforward.
Then we have
\begin{align*}
\EE\dual_{Q_{1,k}(\PP^n,d) / \fM_{1,k}} & \cong \mathrm{cone} \left( u^*\TT_{\fM^{line}_{1,k} / \fM_{1,k} } \stackrel{b\circ a}{\lra} \EE\dual_{Q_{1,k}(\PP^n,d) / \fM^{line}_{1,k} } \right) \\
& \cong \mathrm{cone} \left( R^\bullet\pi_* \cO_{Q_{1,k}(\PP^n,d)} \stackrel{\varphi'}{\lra} R^\bullet\pi_*f^*T_{[\CC^{n+1} / \CC^*]} \right) \\
& \cong R^\bullet\pi_* f^* T_{[\CC^{n+1} / \CC^*]}.
\end{align*}

\begin{rema}\label{distingtriRemark}
By the above argument, we can replace $\varphi$ by $\varphi'$, which is the morphism induced from the section $  (u_0,\dots, u_{n})  : \cO_{Q_{1,k}(\PP^n,d)} \to \sL_{\cC}^{n+1}$ by taking the derived pushforward.
\end{rema}

Next we define a local relative obstruction theory of $Q_{1,k}(\PP^n,d)$ over $\fM_{1,k,d}^{div}$. Although there is no natural morphism from $Q_{1,k}(\PP^n,d)$ to $\fM_{1,k,d}^{div}$, we can consider the morphism locally as follows.
Consider a point $x=[(C,p_1,\dots,p_k,L,\{u_i\}^{n}_{i=0})] \in Q_{1,k}(\PP^n,d)$ and an open subset $\cU_0 \subset Q_{1,k}(\PP^n,d)$ defined by the condition $u_0 \neq 0$ containing $x$. Then there is a morphism $p: \cU_0 \to \fM_{1,k,d}^{div}$ defined by
\begin{align*}
p : \cU_0 & \to \fM_{1,k,d}^{div}, \\
[(C,p_1,\dots,p_k,L,\{u_i\}^{n}_{i=0})] & \mapsto [(C, p_1,\dots,p_k, u_0^{-1}(0) )].
\end{align*}

Over this local chart $\cU_0$ of $Q_{1,k}(\PP^n,d)$, a (dual) relative obstruction theory $\EE\dual_{\cU_0 / \fM_{1,k,d}^{div}}$ is defined by the following in \cite{CL}:

\begin{align}\label{eq:relPOT3}
\EE\dual_{\cU_0 / \fM_{1,k,d}^{div}} : = R\pi_* \cO_{\cC}(\cD)^{\oplus n}
\end{align}
where $\pi : \cC \to \cU_0$ is the universal curve and $\cD \subset \cC$ is the universal divisor defined by the universal section $s_0$ of the universal bundle $\sL_{\cC}$ on $\cC$.

\subsection{Local charts and local equations}
In this section, we will study the local structure of $Q_{1,k}(\PP^{n},d)$, parallel to \cite{HL10} which studied local structure of the stable map space $M_{1,k}(\PP^n,d)$.



Recall the the morphism from the open neighbourhood $\cU_0 \subset Q_{1,k}(\PP^n,d)$ to the Artin stack $\fM_{1,k,d}^{div}$ defined in Section \ref{Sect:relPOTs}.
We also consider a closed point
$$x=[(C,p_1,\dots,p_k,L,\{u_i\}^{n}_{i=0})] \in \cU_0.$$
Let us denote the divisor $u_0^{-1}(0)$ by $D$ and let $\cV \to \fM_{1,k,d}^{div}$ be a smooth affine chart with
$$[( (\cC_{\cV})_0, p_1(0),\dots, p_j(0), \cD_0)] = [ (C, p_1,\dots, p_k, D ) ] = q(x).$$
Here, $\cC_{\cV}$ is a canonical curve over $\cV$,  $p_i : \cV \to \cC_{\cV}$ are universal sections and $\cD$ is a universal divisor on $\cC_{\cV}$.
In fact, $\cU_0$ will be turned out as an open set of a total space of  $\rho_* \cO_{\cC_{\cV}}(\cD)$ where $\cD$ is a universal divisor on the universal curve $\rho : \cC_{\cV} \to \cV$. So we need to find a resolution of $\rho_* \cO_{\cC_{\cV}}(\cD)$.
For this, we first show the following lemma.

\begin{lemm}\label{divisor splitting}
By taking $\cV$ small enough, there is an equivalence of line bundles:
\[
\cO_{\cC_{\cV}}(r \cD) \cong \cO_{\cC_{\cV}}(\cD_1 + \dots + \cD_{rd})
\]
where $r\geq 1$ is an integer, $\cD_1 \dots \cD_{rd}$ are sections $\cV \to \cD$ disjoint to each others.
\end{lemm}
\begin{proof}[(Sketch of the proof)] Basically the proof can be obtained similarly as \cite[Lemma 2.1]{LLO}.
\\
\medskip
Case 1) $d=1$. It is clear that there is nothing to proof. So we will just sketch the proof.
\\
\medskip
Case 2) $d \geq 2$.
Take the neighbourhood $\cV$ small enough. Then, from the degree condition $d\geq 2$, we can find two sections $s_1, s_2$ of $\cO_{\cC_{\cV}}(r \cD)$ which gives a family of degree $r \cdot d$ morphisms to $\PP^1$.  Since $\cV$ is small enough, we can find a linear combination $a s_1 + b s_2$ whose zero is $\cD_1 + \dots \cD_{rd}$ where $D_i$ are family of degree 1 effective divisors disjoint to each others.
\end{proof}





Same as the stable map spaces case \cite{HL10}, We can choose sections $\cA, \cB : \cV \to \cC_{\cV}$ lies in core subcurves for each fiber, and disjoint with each others. Moreover we may assume that $\cA,\cB$ are disjoint to the divisors $\cD_1, \dots, \cD_{rd}$. Here, we define core subcurve of a genus $g$ curve $X$ by a minimal genus $g$ subcurve of $X$.

Let $\sL := \cO_{\cC_{\cV}}(\cD)$. By the above lemma, we have $\sL^{\otimes r} \cong \cO_{\cC_{\cV}}(\cD_1 + \dots \cD_{rd})$.
We consider the inclusion of
sheaves
$$ \sM_i:=\cO_{\cC_{\cV}}(\cD_i+\cA-\cB)\mapright{\sub}
\sM=:\cO_{\cC_{\cV}}\left( \sum_{i=1}^{rd}\cD_i +\cA-\cB \right)
$$
and the induced inclusions
$$\eta_i: \rho_*\sM_i \hookrightarrow
\rho_*\sM.
$$
Both are locally free since $R^1\rho\lsta\sM_i$ and
$R^1\rho\lsta\sM=0$. By Riemann-Roch,
$\rho\lsta\sM_i$ is invertible and the rank of $\rho\lsta\sM$ is
$d$. We then let
$$
\varphi: \rho\lsta\sM\lra
\rho\lsta\bl\cO_{\cC_{\cV}}\left( \sum_{i=1}^{rd}\cD_i +\cA-\cB \right)|_{\cA}\br=\rho\lsta\bl\sO_{\cA}(\cA))
$$
and
$$\varphi_i: \rho\lsta\sM_i\lra
\rho\lsta\bl\cO_{\cC_{\cV}}\left( \sum_{i=1}^{rd}\cD_i +\cA-\cB \right)|_{\cA}\br=\rho\lsta\bl\sO_{\cA}(\cA))
$$
be the evaluation homomorphisms. Obviously, $\varphi_i=\varphi
\circ \eta_i$. Since we assumed that $\cV$ is affine, the sheaf $\rho\lsta\bl\sO_{\cA}(\cA))$ is isomorphic to $\sO_\cV$.

\begin{lemm}\cite[Lemma 4.10]{HL10}
\label{lemm:usefulFacts}
We have
\begin{enumerate}
\item $\rho\lsta\sL^{\otimes r}\cong\sO_\cV\mathop{\oplus}\rho\lsta
\cO_{\cC_{\cV}}\left( \sum_{i=1}^{rd}\cD_i - \cB \right)$;
\item $\rho\lsta
\cO_{\cC_{\cV}}\left( \sum_{i=1}^{rd}\cD_i - \cB \right)\cong
\ker\varphi$;
\item $\mathop{\oplus}_{i=1}^{rd} \eta_i: \bigoplus_{i=1}^{rd}
\rho_*\sM_i\lra \rho_*\sM$ is an isomorphism, and $\mathop{\oplus}_{i=1}^{rd}
\varphi_i={\varphi \circ \mathop{\oplus}_{i=1}^{rd}\eta_i}.$
\end{enumerate}
\end{lemm}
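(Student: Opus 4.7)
The plan is to derive each of the three assertions from a short exact sequence on $\cC$ pushed forward along $\rho$, combined with cohomology and base change and the affineness of $\cV$. The geometric inputs are the nontriviality conditions $\cN(\cD_i-\cA)|_{\cC_s}\ncong\sO_{\cC_s}$ and $\cN(\cD_i-\cB)|_{\cC_s}\ncong\sO_{\cC_s}$, together with the hypothesis that $\cB$ passes through the core of every fiber.

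First I would prove (2). Push forward
$$0\lra \fL^{\otimes r}(-\cB)\lra \cM\lra \cM|_\cA\lra 0, \qquad \cM=\fL^{\otimes r}(\cA-\cB),$$
and, using the canonical trivialization of $\fL^{\otimes r}(-\cB)|_\cA$ (available after shrinking $\cV$), identify $\rho\lsta(\cM|_\cA)$ with $\rho\lsta\sO_\cA(\cA)$. Left-exactness of $\rho\lsta$ then gives $\ker\varphi=\rho\lsta\fL^{\otimes r}(-\cB)$, which is (2). Next, for (1), I push forward
$$0\lra \fL^{\otimes r}(-\cB)\lra \fL^{\otimes r}\lra \fL^{\otimes r}|_\cB\lra 0,$$
with $\rho\lsta(\fL^{\otimes r}|_\cB)\cong\sO_\cV$ after shrinking $\cV$. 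The substantive step is verifying that the evaluation $\rho\lsta\fL^{\otimes r}\to\sO_\cV$ is surjective, equivalently that $\cB(s)$ is not a base point of $|\fL^{\otimes r}_s|$ on each fiber $\cC_s$. This follows from $\cB\notin\cD$ (forced by $\cN(\cD_i-\cB)|_{\cC_s}\ncong\sO_{\cC_s}$, which rules out $\cD_i=\cB$) combined with the positive degree of $\fL^{\otimes r}$ on the component containing $\cB$ (since $\cB$ meets the core). Once short-exactness is established, the sequence splits automatically because $\cV$ is affine, yielding (1).

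For (3), the identity $\bigoplus\varphi_i=\varphi\circ\bigoplus\eta_i$ is tautological from the definitions of the evaluation maps. To show $\bigoplus\eta_i$ is an isomorphism, note that both source and target are locally free on $\cV$ of total rank $rd$: each $\rho\lsta\sM_i$ is invertible by Riemann--Roch on a genus-one fiber (with $\deg\sM_i|_{\cC_s}=0+1+1-1=1$) together with the vanishing of $R^1\rho\lsta\sM_i$, while $\rho\lsta\sM$ has rank $rd$ by the analogous calculation. Hence it suffices to check bijectivity fiberwise. On $\cC_s$ the canonical section of $\sM_i|_{\cC_s}$ maps under $\eta_i$ to the section of $\sM|_{\cC_s}$ with divisor $(\cD-\cD_i)+\cA-\cB$; the hypotheses on $\cN(\cD_i-\cA)|_{\cC_s}$ and $\cN(\cD_i-\cB)|_{\cC_s}$ ensure the $\sM_i|_{\cC_s}$ are pairwise non-isomorphic and the images have pairwise distinct zero divisors, yielding linear independence.

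The main obstacle will be the fiberwise bijectivity in (3) at \emph{reducible} nodal genus-one fibers: one must verify that each $\sM_i|_{\cC_s}$ has a one-dimensional space of global sections, with the stated divisor, despite $\sM_i$ possibly having degree zero on several irreducible components; and that the nodal gluing conditions preserve linear independence of the image sections. This reduces to a component-by-component degree bookkeeping combined with the nontriviality hypotheses, essentially the genericity argument of \cite[Lemma 4.10]{HL1}.
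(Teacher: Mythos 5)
You should first be aware that the paper contains no proof of this statement: it is quoted directly from \cite[Lemma 4.10]{HL1} (in a mildly generalized form, with the degree-zero twist $\cN$), so the comparison can only be with the argument in that reference. Your outline --- push forward the two evident short exact sequences, identify $\ker\varphi$ with $\rho\lsta\fL^{\otimes r}(-\cB)$ by left-exactness, split off $\sO_\cV$ using affineness of $\cV$, and verify $\oplus\eta_i$ fiberwise after a rank count --- is the right shape, and your treatment of (2) is fine. The genuine gap is in your justification of the surjectivity needed for (1). You claim $\cB(s)$ is not a base point of $\fL^{\otimes r}|_{\cC_s}$ because $\fL^{\otimes r}$ has ``positive degree on the component containing $\cB$, since $\cB$ meets the core.'' This is backwards: $\cB$ is required to pass through the core precisely because the core is the component on which $\deg\fL$ can vanish --- that is the ghost stratum $\cX\lgst$, which is the only locus where this lemma (and the entire local-equation analysis) has content; on fibers whose core carries positive degree one checks $R^1\rho\lsta\fL^{\otimes r}=0$ and everything is unobstructed. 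On a ghost fiber every global section of $\fL^{\otimes r}|_{\cC_s}$ restricts to a section of the degree-zero bundle $\fL^{\otimes r}|_{\mathrm{core}}$, so positivity proves nothing there. The surjectivity of $H^0(\fL^{\otimes r}|_{\cC_s})\to \fL^{\otimes r}|_{\cB(s)}$ on such fibers requires that $\fL^{\otimes r}|_{\mathrm{core}}$ be \emph{trivial} (so the constant section on the core, extended across the positive-degree tails, evaluates to a unit at $\cB(s)$); if it were a nontrivial degree-zero bundle, all sections would vanish identically on the core and the evaluation map would be zero. In \cite{HL1} this triviality is automatic because $\fL\cong\sO_\cC(\cD)$ with $\cD$ effective and hence disjoint from the degree-zero core; here it is the corresponding property of $\cN|_{\mathrm{core}}$, and it is the step you must actually supply.

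There is a second, smaller gap in (3): for $rd\ge 3$, sections of the line bundle $\sM|_{\cC_s}$ with pairwise distinct zero divisors need not be linearly independent (three sections of $\sO_{\Po}(2)$ already give a counterexample), so ``pairwise distinct divisors, yielding linear independence'' is a non sequitur. The actual argument is a dimension count rather than a genericity-of-divisors statement: the hypotheses $\cN(\cD_i-\cA)|_{\cC_s}\ncong\sO_{\cC_s}$ and $\cN(\cD_i-\cB)|_{\cC_s}\ncong\sO_{\cC_s}$ force $h^1(\sM_i|_{\cC_s})=0$ and $h^0(\sM_i|_{\cC_s})=1$ on every reducible fiber, and one then shows directly, component by component along the rooted tree, that $H^0(\sM|_{\cC_s})$ is the direct sum of the lines $H^0(\sM_i|_{\cC_s})$. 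Your final paragraph correctly identifies this as the hard case but defers it to \cite[Lemma 4.10]{HL1} rather than proving it, so as written the proposal establishes neither (1) nor (3).
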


Note that $\rho_* \sM_i \cong \cO_{\cV}$ and $\rho_*(\cO_{\cA}(\cA)) \cong \cO_{\cV}$ since we may assume $\cV$ sufficiently small. Then $\vphi_i$ is a morphism between trivial bundles. To describe each morphism $\vphi$ explicitly, we review arguments in \cite[Section 4]{HL10}.

For a weighted genus one nodal curve $C$, Let $\gamma^0$ be the associated dual graph. Then we contract a subgraph of $\gamma^0$ comes from the core subcurve, making the new graph $\gamma^1$. We denote the contracted vertex by `o'. o is also called the root of the graph. Using the following four operations on the rooted tree $\gamma^1$, {\it pruning}, {\it collapsing}, {\it specialization}, and {\it advancing}, we obtain a terminally weighted tree $\gamma$. See \cite[Section 3.2]{HL10} for details. Here, `terminally weighted` means weights are concentrated on the terminal(=maximal order) vertices. Note that the vertex set of every rooted tree has natural order having the root vertex as a minimal element.

Let $\gamma$ be the terminally weighted tree associated to $(C, p_1,\cdots,p_k,L)$. The weight is given by degrees of $L$ on each components of $C$. For each vertex $v\in \gamma$ we define
$$\zeta_v=\zeta_q\in \Gamma (\sO_\cV),
$$
where $q$ is the associated node of $v$, and $\Sigma_q=\{\zeta_q=0\}$ is the locus such that the node $q$ is not smoothed.
For any terminal vertex $i \in \text{Ver}(\gamma)^t$, we let
$$
\zeta_{[i,o]} =  \prod_{i \succeq v\succ o}\zeta_v.
$$
We have the following theorem,
\begin{theo}\label{main}\cite[Lemma 4.16]{HL10}
The direct image sheaf $\rho_* \sL^{\otimes r} $ is a direct sum of
$\sO_\cV^{\mathop{\oplus} (rd-\ell+1)}$ with the kernel sheaf of the
homomorphism \beq \label{simHom}
\mathop{\oplus}_{i=1}^{\ell}\varphi_i:\sO_\cV^{\mathop{\oplus} \ell}\lra \sO_\cV,
\quad \varphi_i=c_i \cdot \zeta_{[i,o]}, \quad c_i\in \CC \eeq
 where $\ell$ is the number of terminals vertices of $\gamma$.
\end{theo}

For a point in $Q_{1,k}(\PP^n,d)$, let $U$ be a small neighborhood of it. We pick a smooth chart $\mathcal{V}\rightarrow \fM_{1,k,d}^{div}$, which contains the image of $U\rightarrow \fM_{1,k,d}^{div}$. Let $\cU= \cV \times_{\fM_{1,k,d}^{div}} U$ and $\cE_{\cV}$ be the total space of the vector bundle $\rho_*\sL(\cA)^{\mathop{\oplus} n}$. Let $p:\cE_{\cV}\rightarrow \cV$ be the projection.
Then the tautological restriction homomorphism
$$\text{rest}: \rho_* \sL(\cA)^{\mathop{\oplus} n}\lra \rho_* (\sL(\cA)^{\mathop{\oplus} n}|_\cA)
$$
lifts to a section
\beq \label{sec1} F \in \Gamma
(\cE_{\cV}, p^*\rho_* (\sL(\cA)^{\mathop{\oplus} n}|_{\cA})).
\eeq

Then there is a canonical open immersion $\cU\to (F=0)\sub
\cE_{\cV}$. To a terminal vertex
$b \in \text{Ver}(\gamma)^t$, we associate $n$ coordinate functions $w_{b,1},\cdots,w_{b,n}\in\Gamma(\sO_{\cE_{\cV}})$.
We then  set
$$\Phi_\gamma=(\Phi_{\gamma,1},\cdots,\Phi_{\gamma,n}), \quad
\Phi_{\gamma,e} = \sum_{b\in \text{Ver}(\gamma)^t} \zeta_{[b,o]}w_{b,e}.
$$
Similar to Hu and Li's \cite[Theorem 2.19]{HL10}, we have the following theorem
\begin{theo}\label{Main9}
For a point in $Q_{1,1}(\PP^n,d)$, let $\gamma$ be the associated weighted tree, choosing $\cV$ as above and shrinking it if necessary and fix an isomorphism $ p^*\rho_*(\sL(\cA)^{\mathop{\oplus} n}|_{\cA})\cong \sO_{\cE_{\cV}}^{\mathop{\oplus} n}$. Then we can find regular functions over $\cE_{\cV}$, $w_{b,1},\cdots,w_{b,n}$, from coordinate functions of $\sO_{\cE_{\cV}}^{\mathop{\oplus} n}$ and node-smoothing parameter functions $\zeta_i$ such that
$$
F=(\Phi_{\gamma,1},\cdots,\Phi_{\gamma,n}).
$$
\end{theo}

When $k=1$, let $\gamma$ be a stable terminally weighted rooted trees of total weight $d$. We can easily check that $\gamma$ is a one path trees. Therefore $\gamma$ has only one terminal vertex, so that we have
$$
\Phi_{\gamma,e} = \zeta_1 w_{e}, \ \Phi_{\gamma} = (\zeta_1 w_1, \dots, \zeta_1 w_n)$$
where $\zeta_1$ is a node-smoothing parameter correspond to the unique terminal vertex of $\gamma$.
Let us denote $\zeta_1$ by $\zeta$. The local equation for $Q_{1,1}(\PP^n,d)$ can be easily described as the following.
\begin{coro}\label{Main1}
For a point in $Q_{1,1}(\PP^n,d)$, choosing $\cV$ as above and shrinking it if necessary and fixed $ p^*\rho_*(\sL(\cA)^{\mathop{\oplus} n}|_{\cA})\cong \sO_{\cE_{\cV}}^{\mathop{\oplus} n}$, we can find $n+1$ regular functions $w_1,\cdots,w_{n},\zeta$ over $\cE_{\cV}$ such that
$$
F=(w_1\zeta,\cdots,w_{n}\zeta).
$$
Furthermore, each $w_i$ and $\zeta$ has smooth vanishing locus, which intersect transversally to each others.
\end{coro}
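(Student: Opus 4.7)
The plan is to invoke Theorem \ref{Main9} and specialize its conclusion using the very restricted combinatorics of $\Lambda_{1,d}$. By Theorem \ref{Main9}, after shrinking $\cV$ and fixing the isomorphism $p^*\rho_*(\fL(\cA)^{\oplus n+1}|_{\cA})\cong \sO_{\cE_{\cV}}^{\oplus n+1}$, the defining section takes the form $F=\Phi_\gamma=(\Phi_{\gamma,1},\ldots,\Phi_{\gamma,n+1})$ with $\Phi_{\gamma,e}=\sum_{b\in\text{Ver}(\gamma)^t}\zeta_{[b,o]}w_{b,e}$, where $\gamma$ is the terminal weighted tree of the given point. The entire task is to show that when $k=1$ the outer sum has exactly one term.

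The first step is to enumerate $\Lambda_{1,d}$. Since $\gamma$ is terminal, the positive-weight vertices are exactly the terminal ones, so the full weight $d$ is carried by terminals. A hypothetical interior (non-root, non-terminal) ghost vertex $v$ would lie on a path and thus carry $\#F_v=0$ flags and $\#Edg_v=2$ edges, giving $\#F_v+\#Edg_v=2$, which violates the strict stability inequality $\#F_v+\#Edg_v>2$. Hence no interior ghost vertex occurs. With only $k=1$ marking, the tree must be a single path terminating in one vertex $b$ that carries both the marking and the full weight $d$, leaving precisely two possibilities: (a) the trivial tree with $o=b$ (marking on the core, $L$ of degree $d$ on the core); (b) the two-vertex tree consisting of the ghost root $o$ joined by one edge to the terminal $b$ (marking on a degree-$d$ rational tail attached to the genus-one core).

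With a unique terminal $b$, the sum in $\Phi_{\gamma,e}$ collapses to $\zeta_{[b,o]}w_{b,e}$. Setting $w_e\defeq w_{b,e}$ and $\varsigma_1\defeq \zeta_{[b,o]}$ gives at once $F=(w_1\varsigma_1,\ldots,w_{n+1}\varsigma_1)$. In case (a), $\varsigma_1$ is the empty product, hence the unit $1$, so its zero locus is empty. In case (b), $\varsigma_1=\zeta_v$ is the smoothing parameter of the unique node, a single coordinate function on $\cV$.

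The smoothness and transversality assertions follow because the $n+2$ regular functions $w_1,\ldots,w_{n+1},\varsigma_1$ extend to part of a regular system of parameters at the base point of $\cE_\cV$: the $w_{b,e}$ are fiber coordinates on the vector bundle $\cE_\cV\to\cV$, while $\zeta_v$ is one of the coordinate functions on the smooth Artin chart $\cV\to\fD_{1,1}$ pulled back to $\cE_\cV$. Each vanishing locus is therefore a smooth divisor, and any two of them meet transversally. The only non-routine step in the plan is the combinatorial enumeration of $\Lambda_{1,d}$; once that is settled, the corollary reduces to a direct transcription of Theorem \ref{Main9}.
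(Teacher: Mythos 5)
Your proposal is correct and is essentially the paper's own (largely implicit) argument: the paper likewise obtains Corollary \ref{Main1} by specializing Theorem \ref{Main9} to the two one-path trees in $\Lambda_{1,d}$, with the smoothness and transversality claims read off from the fact that the $w_{b,e}$ and $\zeta_v$ are coordinate functions on $\cE_{\cV}$. The only thin spot is that you exclude branching by already assuming the tree is a path when you count edges at an interior ghost vertex; the cleaner reason, used implicitly in the proof of Lemma \ref{prop:strataSmooth}, is that quasimap stability forces each terminal branch to carry at least one marking ($m_i\ge 1$), so $k=1$ permits at most one branch and hence at most one terminal vertex.
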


When $k>1$, as in \cite{HL10}, let $\Theta_s$ be the closure in $\fM^{wt}_{1,k}$ of the locus where the weight is zero on the genus one core component, and has $s$ rational components attach to the genus one component.
Let $\widetilde{\fM}^{wt}_{1,k}$ be the successive blow up $\fM^{wt}_{1,k}$ along $\Theta_1, \dots, \Theta_d $. Then irreducible components of $\widetilde{Q}_{1,k}(\PP,d):=Q_{1,k}(\PP,d)_{\fM^{wt}_{1,k}}\times\widetilde{\fM}^{wt}_{1,k}$ are smooth and intersect transversally, and we also have the following local equations. The following is a direct analogue of \cite[Theorem 2.19]{HL10} and \cite[Proposition 2.1]{LO} in stable quasi-map spaces.

\begin{theo}\label{Main10}
For a point in $\widetilde{Q}_{1,k}(\PP,d)$ choosing an smooth affine chart $\widetilde{\cV}$ of $ \widetilde{\fM}_{1,k} $, and shrinking it if necessary and fixed $ p^*\rho_*(\sL(\cA)^{\mathop{\oplus} n}|_{\cA})\cong \sO_{\cE_{\widetilde{\cV}}}^{\mathop{\oplus} n}$, we can find $n + d'$ regular functions $w_1,\cdots,w_{n}$ and $\zeta_1,\dots,\zeta_{d'}$ over $\cE_{\widetilde{\cV}}$ where $d'=\min\{k,d\}$, such that
$$
F=(w_1\tau, \cdots, w_{n}\tau ), \quad \tau := \zeta_1\cdots \zeta_{d'}.
$$
Furthermore, each $w_i$ and $\zeta_j$ has smooth vanishing locus, and they intersect transversally to each others.
\end{theo}

Set $\cX=Q_{1,1}(\PP^n,d)$, let $\pi_{\cX}:\cC_{\cX}\to \cX$ be the universal family and $\sL_{\cX}$ be the universal line bundle over $\cC_{\cX}$.  By the stability conditions, we know that $\cX$ has two different irreducible components, the main component $\cX_{\rede}$ (where the underlying curves of the generic points are smooth elliptic curves),  and the other is the so called ghost component $\cX_{\gst}$.
Locally, $\cX_{\rede} = \{w_1 = \dots = w_n = 0 \}$ and $\cX_{\gst} = \{ \tau = 0 \}$.
Then by the proof of \cite[Theorem 2.11]{HL10}, we have
\begin{theo}\label{Main3}
The direct image sheaf $\pi_{\cX_{\rede *} } \left( \sL_{\cX}^{\otimes r} |_{\cX_{\rede} } \right)$ is locally free of rank $rd$,
and the direct image sheaf $\pi_{\cX_{\gst *} } \left( \sL_{\cX}^{\otimes r} |_{\cX_{\gst} } \right)$ is locally free of rank $rd + 1$.
\end{theo}

\begin{rema}
For $k>1$, we can obtain similar result as Theorem \ref{Main3}. In this case, ghost component is not irreducible. For each irreducible component of $\wtil{Q}_{1,k}(\PP^n,d)$, denoted by $\wtil{Q}_\gamma$, the direct image sheaves $\pi_{\wtil{Q}_{\gamma} * } \left( \sL_{\wtil{Q}_{1,k}(\PP^n,d)}^{\otimes r} |_{\wtil{Q}_\gamma } \right)$ is locally free of rank $rd+1$.
Also, the direct image sheaf $\pi_{\wtil{Q}_{\rede} *} \left( \sL_{\wtil{Q}_{1,k}(\PP^n,d)}^{\otimes r} |_{\wtil{Q}_{\rede} } \right)$ is locally free of rank $rd$, where $\wtil{Q}_{\rede}$ denotes the main component.
\end{rema}

\section{Moduli of stable quasimaps with fields}
\def\upf{^{\oplus 5}}
\def\umtf{{-\otimes 5}}
\subsection{Stable quasimaps with fields}
First we recall the moduli stack of stable quasimaps with fields introduced in \cite{L}.
To simplify the notation, we will focus on the genus one case. Let us abbreviate $Q:=Q_{1,k}(\PP^n,d)$. Let
$$\pi_{Q}: \cC_{Q}\lra Q,\quad
\sP^i_{Q}=\sL_{Q}^{\vee\otimes \deg q_i}\otimes \omega_{\cC_{Q}/Q },\quad 1\le i\le m.
$$
As in \cite{L}, let $\cP = \cP_{1,k}=C(\oplus_{i=1}^m\pi_{Q*}\sP^i_Q)$ be the cone stack over $Q$.
The relative perfect obstruction theory over
$\cP\to \fM^{line}_{1,k}$ is given by
\beq\label{y-o}
\phi_{\cP/\fM^{line}_{1,k}}:\TT_{\cY/\fM^{line}_{1,k}} 
\lra  \mathbb{E}\dual_{\cP/\fM^{line}_{1,k}} ,\quad \mathbb{E}\dual_{\cP/\fM^{line}_{1,k}}\defeq   R^\bullet \pi_{\cP\ast}(\sL_\cP^{\oplus (n+1)}\bigoplus \oplus_i\sP^i_\cP),
\eeq
where $$\pi_{\cP}: \cC_\cP\lra \cP,\quad
\sP^i_\cP=\sL_\cP^{\vee\otimes \deg q_i}\otimes \omega_{\cC_\cP/\cP},\quad 1\le i\le m
$$ is the universal curve and $\TT_{\cP/\fM^{line}_{1,k}}$ denotes the relative tangent complex.


According to the convention, we call the cohomology sheaf
$$\Ob_{\cP/\fM^{line}_{1,k}}\defeq  H^1(\mathbb{E}\dual_{\cP/\fM^{line}_{1,k}})=R^1 \pi_{\cP\ast}(\sL_\cP^{\oplus (n+1)}\bigoplus \oplus_i\sP^i_\cP)
$$
the relative obstruction sheaf of $\phi_{\cP/\fM^{line}_{1,k}}$.


\vsp

The authors \cite{L} constructed a cosection of $\Ob_{\cP/\fM^{line}_{1,k}}$ by using the defining polynomials $q_1(x)=\cdots=q_m(x)=0$ of $X$. Namely a homomorphism
\beq\label{cos}
\sigma': \Ob_{\cP/\fM^{line}_{1,k}}\lra \sO_\cP.
\eeq
 This cosection
can be lifted to a cosection $\widetilde{\sigma}': \Ob_\cP\to\sO_\cP$
of the obstruction sheaf $\Ob_\cP$. Note that the obstruction sheaf $\Ob_\cP$ fits into the exact sequence
$$\mathfrak{f}_{\cP}\sta\TT_{\fM^{line}_{1,k}}\lra \Ob_{\cP/\fM^{line}_{1,k}}\lra \Ob_\cP\lra 0.
$$
The degeneracy locus $D(\sigma')$ of $\sigma'$,  where
$\sigma$ is not surjective, is the closed subset
\beq\label{emb}
D(\si')=Q_{1,k}(X,d)\sub \cP.
\eeq
Moreover we have $A_{*} D(\si')=A_{*} Q_{1,k}(X,d)$ by the result in \cite{CL}.
Furthermore, in \cite{CL} the authors defined the (localized) virtual cycle for $\cP$ as
\[
[\cP]\virt_\loc := 0^!_{\sigma',\loc}[\fC_{\cP / \fM_{1,k}^{line}}] \in A_{*} D(\si')=A_{*} Q_{1,k}(X,d)
\]
where $0^!_{\sigma',\loc}$ is the the localized Gysin map defined in \cite{KL} for the cosection $\sigma'$, and $\fC_{\cP/\fM^{line}_{1,k}}$ is the relative intrinsic normal cone.


\begin{theo}[\cite{L},\cite{KimJ}] \label{theorem} We have
$$[\cP]_{\loc}\virt=(-1)^{(\sum_i\deg q_i)d}[Q_{1,k}(X,d))]^{\virt}.
$$
\end{theo}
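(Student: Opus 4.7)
The plan is to adapt the Chang--Li argument from stable maps with $p$-fields to the stable quasimap setting, combined with the Kiem--Li cosection localized virtual class formalism. First I would unpack the two summands of the obstruction: writing $\Ob_{\cY/\fD_{1,1}} = R^1\pi_{\cY*}\sL_\cY^{\oplus(n+1)} \oplus R^1\pi_{\cY*}\sP_\cY$, one checks that the cosection $\sigma$ on a local tangent vector $(\dot u_1,\dots,\dot u_{n+1},\dot p)$ is given by $\dot u\cdot \nabla q(u) + \dot p\cdot q(u)$. From this formula the degeneracy locus is exactly the common vanishing of $\nabla q(u)$, $q(u)$, and $p$, which by smoothness of $Q$ reduces to $\{q(u)=0,\ p=0\} = \Qmap_{1,1}(Q,d)$ embedded in $\cY$ via the zero section of the cone $\cY\to\cX$.

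Next, I would invoke Kiem--Li cosection localization to produce $[\cY]^{\virt}_{\loc}\in A_1\Qmap_{1,1}(Q,d)$, and then compare it to $[\Qmap_{1,1}(Q,d)]^{\virt}$ by means of the Chang--Li ``key lemma'' (a two-term resolution computation on the intrinsic normal cone). The idea is to observe that, on a neighborhood of a point of $D(\sigma)$, the intrinsic normal cone $\bC_{\cY/\fD_{1,1}}$ lies inside the bundle stack $h^1/h^0(E_{\cY/\fD_{1,1}})$, and splits as a pullback from $\bC_{\Qmap_{1,1}(Q,d)/\fD_{1,1}}\subset h^1/h^0(R^\bullet\pi_*(u^*T_Q))$ together with the $p$-field obstruction factor $R^1\pi_{\cY*}\sP_\cY$. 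After cosection localization, the latter factor is ``killed'' and leaves behind its top Chern class, which on the Calabi-Yau hypersurface produces the sign $(-1)^N$ times the standard virtual class of $\Qmap_{1,1}(Q,d)$.

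Finally, the sign exponent $N$ is computed by Riemann--Roch: on a genus one quasimap curve $(C,L)$ with $\deg L = d$ and $L$ satisfying the stability/ampleness along positive-weight components, one has $\chi(\sL^{-\otimes(n+1)}\otimes\omega_C) = -(n+1)d$, and generically $h^0(\sP)=0$, so $h^1(\sP) = (n+1)d$, giving the claimed factor $(-1)^{(n+1)d}$. The main obstacle is the Chang--Li comparison step: unlike the stable map case, the universal line bundle $\sL_\cY$ can fail to be generically globally generated at base points of the quasimap, and the obstruction sheaf is only the cohomology of a two-term complex rather than a vector bundle. Working locally with a Koszul-type resolution of $E_{\cY/\fD_{1,1}}$ and checking compatibility of the cosection with the canonical morphism to the analogous obstruction complex on $\Qmap_{1,1}(Q,d)$ reduces the issue to a linear-algebra identification, which is precisely the content of \cite{L} and \cite{KimJ}.
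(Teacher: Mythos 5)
The paper does not prove this statement at all: Theorem \ref{theorem} is imported verbatim from \cite{L} and \cite{KimJ}, so there is no internal argument to measure your attempt against. Your outline is a reasonable reconstruction of the strategy of those references: identify the degeneracy locus of $\sigma$ with $\Qmap_{1,1}(Q,d)\subset\cY$, apply Kiem--Li cosection localization to get $[\cY]^{\virt}_{\loc}\in A_1\Qmap_{1,1}(Q,d)$, and compare the localized class with $[\Qmap_{1,1}(Q,d)]^{\virt}$, the sign arising from the rank of the $p$-field obstruction factor, which Riemann--Roch on a genus one curve gives as $h^1(\sP)=(n+1)d$ since $\chi(\sL^{\vee\otimes(n+1)}\otimes\omega_C)=-(n+1)d$ and $h^0=0$ generically. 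But your final paragraph defers the essential comparison step to ``precisely the content of \cite{L} and \cite{KimJ}'', which is exactly the position the paper itself takes; so as a self-contained proof your proposal is incomplete in the same way the paper's bare citation is, while as a summary of the cited argument it is essentially on target. (For the record, \cite{L} carries out a Chang--Li-style degeneration of the cosection, and \cite{KimJ} gives an independent proof via localized Chern characters of $2$-periodic complexes; your sketch is closer to the former.)

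One concrete slip worth fixing: your cosection formula $\sigma(\dot u,\dot p)=\dot u\cdot\nabla q(u)+\dot p\,q(u)$ omits the $p$-field from the first term. The correct cosection (compare $\xi_1,\xi_2$ in Section 4 of the paper, where the $p$-field is denoted $\chi$) is $\sigma(\dot u,\dot p)=p\sum_i \partial_i q(u)\,\dot u_i+q(u)\,\dot p$. With your formula the variable $p$ never enters, so nothing forces $p=0$ on the degeneracy locus, and the condition $\nabla q(u)=q(u)=0$ is governed by the base points of the quasimap rather than by $Q$; the identification $D(\sigma)=\Qmap_{1,1}(Q,d)$ that you assert follows only from the corrected formula together with smoothness of $Q$ and the Euler relation $\sum_i u_i\partial_i q(u)=(n+1)q(u)$ at base points. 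This does not change your conclusion, but as written the degeneracy-locus computation would fail.
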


We remark that this Theorem holds for all genus $g$ and $k$. For our purpose here, we only state in the case $g=1$.
Set $\phi:\fM^{line}_{1,k}\to  \fM_{1,k}$. Then we have the following distinguished triangles
\beq
\mathfrak{f}_{\cP}^*\mathbb{T}_{\fM^{line}_{1,k}}[-1]\lra \mathbb{T}_{\cP/\fM^{line}_{1,k}}\lra \mathbb{T}_{\cP}\lra \mathfrak{f}_{\cP}^*\mathbb{T}_{\fM^{line}_{1,k}} .
\eeq

By \cite[Lemma 3.6]{CL}, the composing with $\sigma'\circ H^1(\phi_{\cP/\fM^{line}_{1,k}}): \mathbb{T}_{\cP/\fM^{line}_{1,k}}\lra \mathbb{E}\dual_{\cP/\fM^{line}_{1,k}}\lra \sO_\cP$ is zero.  From the following commutative diagram, the cosection $\sigma'$ induces a cosection
 $ \sigma:H^1(\mathbb{E}_{\cP/\fM_{1,k}})\to \sO_\cP$.

\beq
\begin{CD}
H^1(\mathfrak{f}_{\cP}^*\mathbb{T}_{\fM^{line}_{1,k}/\fM_{1,k}}[-1])@>^{=}>>H^1(\mathfrak{f}_{\cP}^*\mathbb{T}_{\fM^{line}_{1,k}/\fM_{1,k}}[-1])
\\
@VVV @VVV \\
H^1(\mathbb{T}_{\cP/\fM^{line}_{1,k}})@>^{H^1(\phi_{\cP/\fM^{line}_{1,k}})}>>H^1(\mathbb{E}\dual_{\cP/\fM^{line}_{1,k}})@>^{\sigma'}>>\sO_\cP
\\
@V^{\theta_{int}} VV @V^\theta VV@V^{=}VV \\
H^1(\mathbb{T}_{\cP/\fM_{1,k}})@>^{H^1(\phi_{\cP/\fM_{1,k}})}>>H^1(\mathbb{E}\dual_{\cP/\fM_{1,k}})@>^{\sigma}>>\sO_\cP
.
\end{CD}
\eeq
By \cite[Proposition 3.5]{CL}, the following morphism
\beq
\eta: H^1(\mathfrak{f}_{\cP}^*\mathbb{T}_{\fM^{line}_{1,k}}[-1])\lra H^1(\mathbb{T}_{\cP/\fM^{line}_{1,k}})\lra H^1(\mathbb{E}\dual_{\cP/\fM^{line}_{1,k}})\lra \sO_\cP
\eeq
is zero. Let $\mathfrak{g}_{\cP}:=\phi\circ\mathfrak{f}_{\cP}:\cP\to \fM_{1,k}$. By the commutative diagram below
\beq  \setlength{\unitlength}{0.5mm}
\begin{CD}
H^1(\mathfrak{f}_{\cP}^*\mathbb{T}_{\fM^{line}_{1,k}/\fM_{1,k}}[-1])@>^{=}>>H^1(\mathfrak{f}_{\cP}^*\mathbb{T}_{\fM^{line}_{1,k}/\fM_{1,k}}[-1])@>^{=}>>H^1(\mathfrak{f}_{\cP}^*\mathbb{T}_{\fM^{line}_{1,k}/\fM_{1,k}}[-1])
\\
@VVV@VVV @VVV \\
H^1(\mathfrak{f}_{\cP}^*\mathbb{T}_{\fM^{line}_{1,k}}[-1])@>>>H^1(\mathbb{T}_{\cP/\fM^{line}_{1,k}})@>^{H^1(\phi_{\cP/\fM^{line}_{1,k}})}>>H^1(\mathbb{E}\dual_{\cP/\fM^{line}_{1,k}})@>^{\sigma'}>>\sO_\cP
\\
@VVV@V^{\theta_{int}} VV @V^\theta VV@V^{=}VV \\
H^1(\mathfrak{g}_{\cP}^*\mathbb{T}_{\fM_{1,k}}[-1])@>>>H^1(\mathbb{T}_{\cP/\fM_{1,k}})@>^{H^1(\phi_{\cP/\fM_{1,k}})}>>H^1(\mathbb{E}\dual_{\cP/\fM_{1,k}})@>^{\sigma}>>\sO_\cP
,
\end{CD}
\eeq
the morphism
\beq
\eta': H^1(\mathfrak{g}_{\cP}^*\mathbb{T}_{\fM_{1,k}}[-1])\lra H^1(\mathbb{T}_{\cP/\fM_{1,k}})\lra H^1(\mathbb{E}\dual_{\cP/\fM_{1,k}})\lra \sO_\cP
\eeq
obtained by the composition is zero. Thus the cosection $\sigma: H^1(\mathbb{E}\dual_{\cP/\fM_{1,k}})\lra \sO_\cP$ can be lifted to the cosection $\Ob_\cP\to \sO_\cP$. Therefore we can define the following virtual cycle
\beq
0^!_{\si,\loc}[\mathfrak{C}_{\cP/\fM_{1,k}}].
\eeq

Since $\phi:\fM^{line}_{1,k}\to  \fM_{1,k}$ is smooth, we have the following commutative diagram:
\beq
\begin{CD}
h^1/h^0(R^{\bullet}\pi_{\cP}\cO_{\cP})@>^f>>h^1/h^0(\mathbb{E}\dual_{\cP/\fM^{line}_{1,k}})@>^\theta>> h^1/h^0(\mathbb{E}\dual_{\cP/\fM_{1,k}})
\\
@AA^{=} A@A^{\phi_{\cY/\fM^{line}_{1,1}}}AA @A^{\phi_{\cY/\fM_{1,1}}}AA \\
h^1/h^0(R^{\bullet}\pi_{\cP}\cO_{\cP})@>>>h^1/h^0(\mathbb{T}_{\cP/\fM^{line}_{1,k}})@>^{\theta_{int}}>> h^1/h^0(\mathbb{T}_{\cP/\fM_{1,k}})
.
\end{CD}
\eeq
Hence we have
\beq
\theta^*\mathfrak{C}_{\cP/\fM_{1,k}}=\mathfrak{C}_{\cP/\fM^{line}_{1,k}}.
\eeq
Therefore
\beq
0^!_{\si,\loc}[\mathfrak{C}_{\cP/\fM_{1,k}}]=0^!_{\si',\loc}[\mathfrak{C}_{\cP/\fM^{line}_{1,k}}] = [\cP]\virt_\loc.
\eeq




When $k=1$, let $\cX=Q_{1,1}(\PP^n,d)$ as in Section \ref{sect:local}. Let
$$\pi_{\cX}: \cC_\cX\lra \cX,\quad
\sP^i_\cX=\sL_\cX^{\vee\otimes \deg q_i}\otimes \omega_{\cC_\cX/\cX},\quad 1\le i\le m.
$$
Let us denote $\cP_{1,1}$ by $\cY$, which is a cone over $\cX$, and let $\mathfrak{f}_{\cY}:\cY\to \fM_{1,1}$ be the forgetful morphism. For any closed point $y=[(C,p_1,L,\{u_i\}^{n}_{i=0})]\in \cY$, let $\cV \to \fM_{1,1}^{div}$ be a smooth affine chart. Since the forgetful morphism $\fM_{1,1,d}^{div}\to \fM_{1,1}$ is smooth, 
$\cV$ is also a smooth affine chart of $\fM_{1,1}$. We may assume that $[( (\cC_{\cV})_0, p_1(0))] = [ (C, p_1 ) ] = \ff_{\cY}(y)$.
Here, $\cC_{\cV}$ is a canonical curve over $\cV$ and $p_1 : \cV \to \cC_{\cV}$ are universal sections. Then by \cite[Proposition 3.1]{CL3} and its proof, we have
\begin{prop}\label{coordinate}
Let $\cU$ be a small affine chart of the closed $y\in \cY$, and $\ff_{\cY}(\cU)\subset \cV$, then $\cU$ can be open embedded in the substack $F^{-1}(0)$, where
\begin{align*}
F:\cV\times  \CC^{dn}\times \CC^{n+m} & \to  \cO_{\cV\times \CC^{dn} \times \CC^{n+m}}^{\oplus (n+m)} \oplus \cO_{\cV\times \CC^{dn} \times \CC^{n+m}}^{\oplus d n} \\
(z,w_1\cdots,w_{n},t) & \mapsto (w_1 \zeta,\cdots,w_n\zeta, t \zeta ,0,\dots,0),\nonumber
\end{align*}
and $z\in \cV\times \CC^{dn}$, $\zeta$ is a regular function on $\cV$, $w_i$ are coordinates of $\CC^{n+1}$ and $t=(t_1,\cdots,t_m)$ are coordinates of $\CC^m$.
\end{prop}

\begin{rema}\label{div-loc}
The section $F$ gives the local model of the moduli space $\cY$ over $\fM_{1,1}^{div}$. It means that the differential of the section $F$
\[
\TT_{\cU / \cV } = \cO_{\cU}^{\oplus dn} \oplus \cO_{\cV\times \CC^{dn} \times \CC^{n+m}}^{\oplus (n+m)} \stackrel{dF}{\longrightarrow} \cO_{\cU}^{\oplus n+m} \oplus \cO_{\cU}^{\oplus d n}
\]
coincides with the (dual) relative perfect obstruction theory $\EE_{\cU / \fM_{1,1}^{div}}$.
\end{rema}
Since locally $\cY=\{w_1=\cdots=w_n=t_1=\cdots=t_m=0\} \cup  \{\zeta_1=0\} $, we know $\cY$ has two different irreducible components $\cY_{\rede}$ and $\cY_\gst$ with $\cY_{\rede}=\cX_{\rede}$.

%
%



\subsection{Comparison of relative perfect obstruction theories}

The natural relative perfect obstruction theory $\EE_{\cY / \fM^{line}_{1,1} }$ is defined as the following
\begin{align}\label{eq:relPOT4}
\EE\dual_{\cY /\fM^{line}_{1,1} } =
p^*\EE\dual_{\cX / \fM^{line}_{1,1} } \bigoplus  \oplus_ip^* R^{\bullet}\pi_{\cX*}(\sP^i_\cX)
\end{align}
where $p : \cY \to \cX$ is the forgetful morphism, $\pi_{\cX}: \cC_\cX \to \cX$ is the universal curve, and $\sL_{\cX}$ is the universal line bundle over $\cC_\cX$. Thus
\begin{align}\label{eq:relPOT4-1}
H^1(\EE\dual_{\cY /\fM^{line}_{1,1} }) =
p^*H^1(\EE\dual_{\cX / \fM^{line}_{1,1} }) \bigoplus  \oplus_ip^* H^1(R^{\bullet}\pi_{\cX*}(\sP^i_\cX))
\end{align}

From the cotangent complexes associated to the triples $\cY \to \fM_{1,1}^{line} \to \fM_{1,1}$ and $\cX \to \fM_{1,1}^{line} \to \fM_{1,1}$, we obtain the diagram
\begin{align*}
\xymatrix{
H^1(\mathfrak{f}_{\cY}^*\mathbb{T}_{\fM^{line}_{1,1}/\fM_{1,1}}[-1]) \ar[r]^-{=} \ar[d] & H^1(\mathfrak{f}_{\cY}^*\mathbb{T}_{\fM^{line}_{1,1}/\fM_{1,1}}[-1]) \ar[d] \\
H^1(\mathbb{T}_{\cY/\fM^{line}_{1,1}}) \ar[r] \ar[d] & p^*H^1(\mathbb{T}_{\cX/\fM^{line}_{1,1}}) \ar[d] \\
H^1(\mathbb{T}_{\cY/\fM_{1,1}}) \ar[r] & p^*H^1(\mathbb{T}_{\cX/\fM_{1,1}}).
}
\end{align*}
Furthermore, we have
\begin{align*}
\xymatrix{
H^1(\mathfrak{f}_{\cY}^*\mathbb{T}_{\fM^{line}_{1,1}/\fM_{1,1}}[-1]) \ar[r]^-{=} \ar[d] & H^1(\mathfrak{f}_{\cY}^*\mathbb{T}_{\fM^{line}_{1,1}/\fM_{1,1} }[-1]) \ar[d] \\
H^1(\mathbb{E}\dual_{\cY/\fM^{line}_{1,1}}) \ar[r] \ar@{->>}[d] & p^*H^1(\mathbb{E}\dual_{\cX/\fM^{line}_{1,1}}) \ar@{->>}[d] \ar@/_1.5pc/[l]_-{j} \\
H^1(\mathbb{E}\dual_{\cY/\fM_{1,1}}) \ar[r] & p^*H^1(\mathbb{E}\dual_{\cX/\fM_{1,1}}) \ar@/_1.5pc/[l]_-{\bar{j}}.
}
\end{align*}
Here $j$ is a morphism which gives the splitting \eqref{eq:relPOT4-1} of $H^1(\EE\dual_{\cY/\fM^{line}_{1,1}})$. Note that the vertical arrows $H^1(\EE\dual_{\cY / \fM^{line}_{1,1}}) \to H^1(\EE\dual_{\cY / \fM_{1,1}})$ and $p^*H^1(\EE\dual_{\cX / \fM^{line}_{1,1}}) \to p^*H^1(\EE_{\cX / \fM_{1,1}})$ are surjective. Then, by chasing the diagram we can show that $j$ induce the morphism $\bar{j}$, which gives the splitting. So we obtain the decomposition
\begin{align}\label{eq:relPOT4-2}
H^1(\EE\dual_{\cY /\fM_{1,1} }) =
p^*H^1(\EE\dual_{\cX / \fM_{1,1} }) \bigoplus  \oplus_i \ p^* H^1(R^{\bullet}\pi_{\cX*}(\sP^i_\cX)).
\end{align}

Parallel to \cite[Lemma 2.4]{LO2}, we will prove the following lemma.

\begin{lemm}
(1) For a sufficiently small open neighbourhood $\cU \subset \cX$, and $\cU\lgst : = \cU \times_{\cX} \cX\lgst$ we have
\begin{align}\label{eq:obsbundleisom1}
H^1( \EE\dual_{\cU /\fM_{1,1}^{div} }|_{\cU\lgst} ) \stackrel{\cong}{\longrightarrow} H^1( \EE\dual_{\cU /\fM_{1,1} }|_{\cU\lgst} ).
\end{align}

(2) Also, for a sufficiently small open neighbourhood $\cU' \subset \cY$, and $(\cU')\lgst : = \cU' \times_{\cX} \cX\lgst$ we have
\begin{align}\label{eq:obsbundleisom2}
H^1( \EE\dual_{\cU' /\fM_{1,1}^{div} }|_{ (\cU')\lgst} ) \stackrel{\cong}{\longrightarrow} H^1( \EE\dual_{\cU' /\fM_{1,1} }|_{ (\cU')\lgst} ).
\end{align}

\end{lemm}

\begin{proof} Since the proof of (2) is parallel to (1), we will only prove (1) here. We first consider the neighbourhood $\cU \subset \cX$. We may assume that $\cU \subset \cU_0$.
Note that $\EE\dual_{\cX / \fM^{line}_{1,1}}|_{\cU} \simeq R^\bullet\pi_{\cX*}\cO_{\cC_{\cU}}(\cD_{\cU})^{\oplus n+1}$ on the neighbourhood $\cU$. Recall the remark \ref{distingtriRemark}, which says that the horizontal arrow
 $\phi : R^\bullet\pi_{\cX*} \cO_{\cC_{\cX}}|_{\cU} \to \EE\dual_{\cX / \fM^{line}_{1,1}} |_{\cU}$ in \eqref{eq:POTdiag1} is induced from the arrow
\[
\xymatrix@C=35pt{
\cO_{\cC_{\cU}} \ar[r]_-{(s_0,\dots,s_n)} & \cO_{\cC_{\cU}}(\cD_{\cU})^{\oplus n+1}
}
\]
by taking $R^\bullet \pi_{\cX*}(-)$.
Consider the exact sequence of complexes
\begin{align}\label{eq:disting1}
0 \to [0 \to \cO_{\cC_{\cU}}(\cD_{\cU})^{\oplus n}] \to [\xymatrix@C=35pt{
\cO_{\cC_{\cU}} \ar[r]_-{(s_0,\dots,s_n)} & \cO_{\cC_{\cU}}(\cD_{\cU})^{\oplus n+1}
}] \to [\cO_{\cC_{\cU}} \stackrel{s_0}{\lra} \cO_{\cC_{\cU}}(\cD_{\cU})] \to 0.
\end{align}
Since $\EE\dual_{\cX/ \fM_{1,1}} |_{\cU} $ is equivalent to the mapping cone $cone(\phi)$, and $[\cO_{\cC_{\cU}} \stackrel{s_0}{\lra} \cO_{\cC_{\cU}}(\cD_{\cU})] \stackrel{qis}{\simeq} \cO_{\cD_{\cU}}$, we have the distinguished triangle
\begin{align*}
\EE\dual_{\cX / \fM^{div}_{1,1}} |_{\cU} \to \EE\dual_{\cX / \fM_{1,1}} |_{\cU} \to R^\bullet\pi_{\cX*}\cO_{\cD_{\cU}} \stackrel{+1}{\lra}
\end{align*}
by taking $R^\bullet\pi_{\cX*}$ to the sequence \eqref{eq:disting1}. Then, by taking the long exact sequence of this distinguished triangle, we obtain the exact sequence:
\begin{align}\label{eq:Obscompare1}
H^1(\EE\dual_{\cX / \fM^{div}_{1,1} }|_x) \to H^1(\EE\dual_{\cX / \fM_{1,1}} |_x) \to 0
\end{align}
for any closed point $x \in \cU$.

\medskip

On the other hand, we can consider the short exact sequence
\[
0 \to \cO_{\cC_{\cU}} \stackrel{s_0}{\lra} \cO_{\cC_{\cU}}(\cD) \to \cO_{\cD} \to 0
\]
where $\cD = s_0^{-1}(0)$ is the family of degree $d$ divisors on the universal curve $\cC_{\cU} \to \cU$.
Therefore we have the short exact sequence
\[
0 \to \cO_{\cC_{\cU}} \stackrel{s_0}{\lra} \cO_{\cC_{\cU}}(\cD)^{\oplus n+1} \to \cO_{\cC_{\cU}}(\cD)^{\oplus n} \oplus \cO_{\cD} \to 0.
\]
From the isomorphisms
\[
\EE\dual_{\cX / \fM_{1,1}}|_{\cU} \simeq
\mathrm{Cone}[ R^\bullet\pi_{\cX*}\cO_{\cC_{\cU}} \to  \EE\dual_{\cX / \fM^{line}_{1,1}}|_{\cU}], \textrm{ and }
\EE\dual_{\cX / \fM^{line}_{1,1}}|_{\cU} \simeq R^\bullet\pi_{\cX*} \cO_{\cC_{\cU}}(\cD)^{\oplus n+1}
\]
we obtain
\begin{align}\label{eq:Cohdims}
\dim H^0(\EE\dual_{\cX / \fM_{1,1}}|_x ) & = h^0(C_x, \cO_{C_x}(D_x))^{\oplus n} + h^0(C_x, \cO_{D_x}) = n \cdot h^0(C_x, \cO_{C_x}(D_x)) + d,
\\ \nonumber
\dim H^1(\EE\dual_{\cX / \fM_{1,1}}|_x ) & = h^1(C_x, \cO_{C_x}(D_x))^{\oplus n} + h^1(C_x, \cO_{D_x}) = n \cdot h^1(C_x, \cO_{C_x}(D_x))
\end{align}
for each closed point $x \in \cU$. The fiber $C_x = \cC_{\cU}|_x$ of the universal curve over $x$ and the degree $d$ divisor $D_x = \cD|_x$ on $C_x$, which is the fiber of the universal divisor $\cD$ over $x$.
If $x \in \cU\lgst $, we observe that
\[
\dim H^0(\EE\dual_{\cX / \fM_{1,1}}|_x ) = n(d+1) + d, \ \dim H^1(\EE\dual_{\cX / \fM_{1,1}}|_x ) = n
\]
from \eqref{eq:Cohdims}. Also it is trivial that $\dim H^1(\EE\dual_{\cX / \fM^{div}_{1,1}}|_x ) = n \cdot h^1(C_x, \cO_{C_x}(D_x)) = 1$ for $x \in \cU\lgst$.
Therefore, for an arbitrary closed points $x \in \cU\lgst$, the morphism
\[
H^1(\EE\dual_{\cX / \fM^{div}_{1,1}} |_x) \to H^1(\EE\dual_{\cX / \fM_{1,1}} |_x)
\]
from \eqref{eq:Obscompare1} is an isomorphism since it is surjective and both vector spaces have same dimension $n$.
Since $\cU\lgst$ is a reduced scheme, we have an isomorphism
\begin{align*}
H^1(\EE\dual_{\cX / \fM^{div}_{1,1}} |_{\cU\lgst} ) \stackrel{\cong}{\lra} H^1(\EE\dual_{\cX / \fM_{1,1}} |_{\cU\lgst} )
\end{align*}

\end{proof}
Because the sheaf $H^1(\EE\dual_{\cX / \fM^{div}_{1,1}} |_{\cU\lgst} )$ is locally free by Remark \ref{div-loc}, we have the following.

\begin{prop} \label{loc}The obstruction sheaf $H^1(\EE\dual_{\cX / \fM_{1,1}} |_{\cU\lgst} )$ is locally free.
\end{prop}


\subsection{Decomposition of the intrinsic normal cone}

Let $R=\text{Spec}(B)$ be a smooth affine variety. Let $\tilde{R}:=R\times \CC^{n+m}$, and $F$ be the section of $\cO_{\tilde{R}}^{n+m}$ with $F=(w_1\zeta,\cdots,w_{n+m}\zeta)$, where $w_i$ are coordinates of $\CC^{n+m}$, and $\zeta \in B$ is a regular function. Denote by $\cZ=F^{-1}(0)$ the zero loci of $F$. Then $\cZ$ has two different components, where $\cZ=\cZ_1\cup\cZ_2$ with $\cZ_1=\{w_1=\cdots=w_{n+m}=0\}$ and $\cZ_2=\{\zeta=0\}$.

\begin{lemm}\label{cone0}
Let $C_{\cZ/\tilde{R}}$ be the normal cone of $\cZ$ in $ \tilde{R}$, then $C_{\cZ/\tilde{R}}=C_1\cup C_2$ has two different irreducible components $C_1$ and $C_2$ support on $\cZ_1$ and $\cZ_2$ respectively, and there is a canonical dominant morphism
\beq
C_{\cZ_2/\tilde{R}}\to C_{\cZ/\tilde{R}}|_{\cZ_2}.
\eeq

\end{lemm}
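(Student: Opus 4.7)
The plan is to identify the components of $\cZ$ scheme-theoretically, analyze the normal cone on a natural open cover of $\tilde R\setminus(\cZ_1\cap \cZ_2)$, and rule out extra top-dimensional components via a dimension argument.

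First I would verify the primary decomposition $I = (w_1,\ldots,w_m)\cap (z)$. Set-theoretically $V(I)=\cZ_1\cup\cZ_2$ is immediate from $V(w_iz)=V(w_i)\cup V(z)$; for the scheme-theoretic equality, any $zg\in (w_1,\ldots,w_m)$ has $(zg)|_{w=0}=0$, so $z\cdot g(0)=0$ in the integral $B$, forcing $g(0)=0$ and hence $g\in (w_1,\ldots,w_m)$, whence $zg\in I$.

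Next, I would analyze the cone on the open cover $\{U_z, V_1, \ldots, V_m\}$ of $\tilde R\setminus(\cZ_1\cap \cZ_2)$, where $U_z=\tilde R\setminus\cZ_2=\{z\text{ invertible}\}$ and $V_i=\{w_i\text{ invertible}\}$. On $U_z$ the ideal becomes $I|_{U_z}=(w_1,\ldots,w_m)$, a regular sequence in a smooth scheme, so $\cZ\cap U_z=\cZ_1\cap U_z$ and $C_{\cZ/\tilde R}|_{U_z}=N_{\cZ_1/\tilde R}|_{U_z}$; on each $V_i$ the ideal becomes $I|_{V_i}=(z)$, so $\cZ\cap V_i=\cZ_2\cap V_i$ and $C_{\cZ/\tilde R}|_{V_i}=C_{\cZ_2/\tilde R}|_{V_i}$. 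I would then define $C_1$ and $C_2$ as the reduced closures in $C_{\cZ/\tilde R}$ of its restrictions to $U_z$ and to $\bigcup V_i$ respectively, so that $C_1$ is supported on $\cZ_1$ and $C_2$ on $\cZ_2$. The local identification on each $V_i$ directly exhibits $C_2$ as $C_{\cZ_2/\tilde R}$ generically; since both are irreducible of the same dimension and share a common dense open, this yields the cycle-level identification $C_2 = C_{\cZ_2/\tilde R}$.

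Finally, to conclude $C_{\cZ/\tilde R}=C_1\cup C_2$, I would use pure-dimensionality of the cone. The normal cone has pure dimension $\dim\tilde R=\dim R+m$ since $\tilde R$ is smooth, and both $C_1$ and $C_2$ already attain this dimension ($\dim \cZ_1+m=\dim R+m$ and $\dim \cZ_2+1=\dim R+m$). Any additional top-dimensional component would have to project into $\cZ_1\cap\cZ_2$, which has dimension $\dim R-1$; but the embedding $C_{\cZ/\tilde R}\hookrightarrow \cZ\times\mathbb{A}^m$ coming from the surjection $\sO_\cZ^m\twoheadrightarrow I/I^2$ with generators $[w_iz]$ forces fibers to have dimension at most $m$, giving total dimension at most $\dim R+m-1<\dim\tilde R$, a contradiction. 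I expect the main technical subtlety to lie in handling the singular locus $\cZ_1\cap\cZ_2$ where the cone's fibers jump in dimension; this is controlled precisely by the fact that $I/I^2$ is globally generated by the $m$ elements $[w_iz]$.
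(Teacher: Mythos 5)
Your proof is correct, but it takes a genuinely different route from the paper. The paper writes down an explicit presentation of the cone: it maps $\fR[A_1,\dots,A_m]$ onto $\bigoplus_k I^k/I^{k+1}$ via $A_i\mapsto w_iz$ and asserts $C_{\cZ/\tilde R}=\Spec\bl\fR[A_1,\dots,A_m]/(w_iA_j-w_jA_i)\br$, then reads off the two components directly from these equations (a vector bundle over $\cZ_1$, and over $\cZ_2\setminus\cZ_1$ the line $\Spec B/(z)[w_1^{-1},w_1,\dots,w_m,A_1]$ whose closure it identifies with $C_{\cZ_2/\tilde R}$ via the canonical surjection onto $B/(z)[w_1,\dots,w_m,A_1]$). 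You instead avoid computing the Rees/associated graded algebra altogether: you identify the cone on the open cover $\{U_z,V_i\}$ where $I$ becomes a regular ideal, and then rule out extra components by combining the purity of $C_{\cZ/\tilde R}$ (dimension $\dim\tilde R$, since $\tilde R$ is smooth) with the fiber bound coming from the embedding $C_{\cZ/\tilde R}\hookrightarrow\cZ\times\AA^m$. What your approach buys is robustness: the paper's presentation requires knowing that $(w_iA_j-w_jA_i)$ exhausts the relations among the generators $w_iz$, a point the paper asserts without proof, whereas your purity-plus-fiber-dimension argument needs only the obvious surjection. What the paper's approach buys is explicitness (the actual equations of the cone, useful if one wants multiplicities). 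Two small points to tighten: your primary decomposition step uses that $B$ is a domain and $z\neq 0$, which is fine here but worth flagging; and $\cZ_2=V(z)$ need not be irreducible or reduced, so ``both are irreducible of the same dimension'' should read ``pure of the same dimension with a common dense open,'' after which the cycle-level identification $C_2=C_{\cZ_2/\tilde R}$ goes through exactly as you say (the paper's own statement carries the same implicit hypothesis, and in the application $\varsigma_1$ has smooth vanishing locus so this is harmless).
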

\begin{proof}Let $\fR:=B[w_1,\cdots,w_{n+m}]/(w_1\varsigma_1,\cdots,w_{n+m}\varsigma_1)$ be the coordinate ring of $\cZ$. Consider the following surjective morphism
\beq
\fR[A_1\cdots,A_{n+m}]\mapsto \bigoplus_{k\ge 0} I^k_{\cZ/\tilde{R}}/I^{k+1}_{\cZ/\tilde{R}}
\eeq
$$
A_i\mapsto w_i\varsigma_1.
$$
Then $C_{\cZ/\tilde{R}}=\text{Spec}\bigg(\fR[A_1\cdots,A_{n+m}]/(w_iA_j-w_jA_i)\bigg)$, which supports on $\cZ_1$ and $\cZ_2$. We have
\begin{eqnarray*}
C_{\cZ/\tilde{R}}|_{\cZ_1}&=&\text{Spec}\bigg(\fR[A_1\cdots,A_{n+m}]/(w_iA_j-w_jA_i)\otimes \fR/(w_1,\cdots,w_{n+m}) \bigg) \\ \nonumber
&=&\text{Spec}(B[A_1\cdots,A_{n+m}]),
\end{eqnarray*}
and

\begin{align}\label{eq:localcone2}
C_{\cZ/\tilde{R}}|_{\cZ_2}&=&\text{Spec}\bigg(\fR[A_1\cdots,A_{n+m}]/(w_iA_j-w_jA_i)\otimes B[w_1,\cdots,w_{n+m}]/(\varsigma_1) \bigg)\\ \nonumber
&=&\text{Spec}\bigg(B/(\varsigma_1)[w_1,\cdots,w_{n+m}][A_1\cdots,A_{n+m}]/(w_iA_j-w_jA_i)\bigg),
\end{align}
Thus the fiber over $C_{\cZ/\tilde{R}}|_{\cZ_2}$ over $\cZ_2$ is the affine cone of the blowing up $\mbox{Bl}_0\CC^{n+m}$, and $C_{\cZ/\tilde{R}}|_{\cZ_1}$ is a vector bundle over $\cZ_1$. They are all irreducible. Hence $C_{\cZ/\tilde{R}}|_{\cZ_2}$ and $C_{\cZ/\tilde{R}}|_{\cZ_1}$ are irreducible.

Because $\cZ_2\subset\cZ\subset \tilde{R}$, there is a canonical morphism
\beq \label{eq:localmorph}
C_{\cZ_2/\tilde{R}}\to C_{\cZ/\tilde{R}}|_{\cZ_2}.
\eeq

The ideal $I_{\cZ_2/\tilde{R}}$ is equal to $(\zeta)$, the cone $C_{\cZ_2 / \wtil{R}}$ is isomorphic to $N_{\cZ_2/\tilde{R}}$ which is a line bundle. Since $I_{\cZ / \wtil{R}} = (w_1 \zeta, \dots, w_{n+m} \zeta)$, the composition of the morphism \eqref{eq:localmorph} with the inclusion $C_{\cZ/\wtil{R}}|_{\cZ_2} \hookrightarrow \cZ_2 \times \CC^{n+m}$ is given by
\begin{align}\label{eq:localmorph2}
N_{\cZ_2/\tilde{R}} = C_{\cZ_2/\tilde{R}} & \to C_{\cZ/\tilde{R}}|_{\cZ_2} \hookrightarrow \cZ_2 \times \CC^{n+m} \\ \nonumber
1 & \mapsto (w_1,\dots, w_{n+m})
\end{align}
where $1$ is a local generator of the line bundle. From the local description \eqref{eq:localcone2} of $C_{\cZ/\wtil{R}}|_{\cZ_2}$, we can check that $C_{\cZ/\wtil{R}}|_{\cZ_2}$ is a closure of the image of the above morphism. Hence \eqref{eq:localmorph} is dominant.
\end{proof}

%

%

Let $\cV$ and $\cU$ be smooth affine charts of $\fM_{1,1}$ and $ \cY$ as in Proposition \ref{coordinate}. Denote by $\widetilde{\cU}:=\cV\times \CC^{dn}\times \CC^{n+m}$.  Then the cone $\mathfrak{C}_{\cY/\fM_{1,1}}|_{\cU}=[C_{\cU/\widetilde{\cU}}/T_{\widetilde{\cU}|_{\cU}}]$ has two different components by Proposition \ref{coordinate} and Lemma \ref{cone0}. Hence $\mathfrak{C}_{\cY/\fM_{1,1}}$ has two different components. Denote them by
\beq\mathfrak{C}_{\cY/\fM_{1,1}}=\mathfrak{C}_{\rede}\cup\mathfrak{C}_{\gst},\eeq
which are supported on $\cY_{\rede}$ and $\cY_{\gst}$ respectively. Consequently,
\beq\label{split-0}
[\cY]\virt\lloc=0_{\si,\loc}^![\mathfrak{C}_{\rede}]+0_{\si,\loc}^![\mathfrak{C}_{\gst}].
\eeq
Let $C_{\gst}$ be the coarse moduli space of $\mathfrak{C}_{\gst}$, then $C_{\gst}\subset H^1(\mathbb{E}\dual_{\cY/\fM_{1,1}})|_{\cY_{\gst}}$.


Let us define $\fM_{\gst}:= \iota\left( \overline{M}_{1,1}\times \fM_{0,2} \right) \subset \fM_{1,1}$ 
where $\iota$ is the node-identifying morphism.
It is a substack whose general points are stable genus one curves attached by rational tails.
Moreover let $\mathfrak{g}_{\cY} : \cY \to \fM_{1,1}$ be the forgetful morphism and $\mathfrak{g}_{\cY_{\gst}}: \cY_{\gst}\to \fM_{1,1}$ be the restriction of $\mathfrak{g}_{\cY}$ on $\cY_{\gst}$. Consider the coarse moduli space $C_{\cY_{\gst}/\fM_{1,1}}$ of the intrinsic normal cone $\fC_{\cY_{\gst}/\fM_{1,1}}$. Note that we have
\beq \label{eq:normalbundle}
C_{\cY_{\gst}/\fM_{1,1}} = \mathfrak{g}_{\cY_{\gst}}^*N_{\fM_{\gst}/\fM_{1,1}},
\eeq
where $N_{\fM_{\gst}/\fM_{1,1}}$ is the normal bundle of $\fM_{\gst}\subset \fM_{1,1}$. Since $\cY_{\gst}\subset \cY$, there is a nature morphism
\beq \label{eq:naturalmorphism}
\phi: C_{\cY_{\gst}/\fM_{1,1}}\to C_{\cY / \fM_{1,1}}|_{\cY\lgst}= C_{\gst} \subset
H^1(\mathbb{E}\dual_{\cY/\fM_{1,1}} |_{\cY_{\gst}} ).
\eeq

By \eqref{eq:localmorph2}, $\phi$ is locally expressed by
\beq \label{localmorph}
\phi|_{\cU}:1\mapsto (w_1,\cdots,w_{n+m},0,\cdots,0).
\eeq
Moreover, from the above local computation for the normal cone, we observe that $\phi$ is a birational morphism. Hence $C_{\gst}$ is birational to the line bundle $\mathfrak{g}_{\cY_{\gst}}^*N_{\fM_{\gst}/\fM_{1,1}}$. We will use this to describe $0^!_{\sigma, \loc}[\fC_{\gst}]$ in the next section.

%
%

\section{Calculations}

\def\barW{{\overline W}}
\def\barV{{\overline V}}
\def\barC{\overline C}

\subsection{Proof of the Theorem \ref{main-0}}

Basically our proof follows contents in \cite[Section 4]{LO2}, which proved a similar statement to our Theorem \ref{main-0} in the case of stable map spaces.

Let $M:=\cX\lgst$, and $\pi_{M}:\cC_{M}\to M$ be the universal family. Let $\sL_{M}$ be the universal bundle over $\cC_M$, and
$\sP^i_{M}=\sL_{M}^{\vee\otimes\deg q_i}\otimes \omega_{\cC_{M}/M}$.
By definition the component $\cY\lgst$ is the total space of a vector bundle $\cL$ on $M$, where
$$\cL=\oplus_{i=1}^m\pi_{M*} \sP^i_{M}.
$$
Furthermore, let
$$\gamma: W\defeq \cY\lgst=\Tot(\cL)\lra  M
$$
be the induced (tautological) projection. Here $\Tot(-)$ denote the total space of the bundle.
We denote the bundles
\beq\label{VV}
V'_1=  R^1\pi_{M\ast} \sL_M^{\oplus (n+1)}
,\quad
V'_2=\oplus_{i=1}^mR^1\pi_{M\ast}\sP^i_{M},\and  V' = V'_1\oplus V'_2.
\eeq
By \cite[Proposition 2.8]{L}, we have $H^1(\TT_{M/\fM^{line}_{1,1}})\cong V'_1$.
For any point $x=(C,p_1,\{u_i\})\in M$, we define
\beq
\xi'_1:(V'_1\otimes \cL)|_x\to \CC,\ \ \quad \xi_1(x)(\dot u_i\otimes \chi)=\sum_{i=1}^m\chi_i\sum \frac{\partial q(u)}{\partial u_i}\dot u_i,
\eeq
\begin{eqnarray*}
&&\xi'_2:V'_2|_x\to \CC,\ \ \quad \xi_2(x)(\dot\chi)=\sum_{i=1}^m\dot\chi_i q_i(u), \ \ \quad\chi=(\chi_1,\cdots,\chi_m)\in \Gamma(M,\cL)\\
&& \ \ \quad (\dot u_i)\in V'_1|_x,\ \ \dot\chi=(\dot\chi_1,\cdots,\dot\chi_m)\in V'_2|x.
\end{eqnarray*}

On the other hand, let $\pi_{\cY}:\cC_{\cY}\to \cY$ be the universal family, and $\sL_{\cY}$ be the universal line bundle over $\cC_{\cY}$. Denote $\pi_{W}:\cC_{W}\to W$, $\sP^i_{W}=\sP^i_{\cY}|_{W}$ and $\sL_{W}:=\sL_{\cY}|_{\cC_{W}}$. Recall that the dual perfect obstruction theory of $\cY/\fM^{line}_{1,1} \,$ is $\, \EE\dual_{\cY/\fM^{line}_{1,1}}=R\bul\pi_{\cY\ast}(\sL_\cY^{\oplus (n+1)}\bigoplus \oplus_{i=1}^m\sP^i_\cY)$.
We let
\beq\label{tiV-2}
\widetilde{V}'_1=H^1\left(R\bul\pi_{W\ast}\sL_{W}^{\oplus (n+1)}\right)\cong \gamma\sta V'_1,\ \
\widetilde{V}'_2=H^1(\oplus_iR\bul\pi_{W\ast}\sP^i_{W})\cong \gamma\sta V'_2,
\eeq
and $ \widetilde{V}'=\widetilde{V}'_1\oplus \widetilde{V}'_2$. Both $\widetilde{V}'_1$ and $\widetilde{V}'_2$ are locally free on
$W$.

Denote $\ti\xi'=(\ti\xi'_1,\ti\xi'_2)$, where $\ti\xi'_1:=\gamma^*(\xi'_1)(\cdot\otimes \epsilon)$, $\epsilon\in \Gamma(W,\gamma^{*}\cL)$ is the tautological section and $\ti\xi'_2:=\gamma^*(\xi'_2)$. Then we have
\beq\label{xi-2}
\ti\xi'=\si'|_{\cY_{\text{gst}}}: \widetilde{V}'\lra \sO_{W},
\eeq
where $\si'$ is the cosection defined in \eqref{cos}.
Next we consider the obstruction theory over the Artin stack $\mathfrak{M}_{1,1}$. Moreover we denote
\beq\label{VV}
V_1=  R^1\pi_{E\ast} f_{E}^* T_{\PP^n} \cong H^1(\mathbb{E}\dual_{\cX/\fM_{1,1}} |_{\cX_{\gst}} )
,\quad
V_2=\oplus_{i=1}^mR^1\pi_{M\ast}\sP^i_{M},\and  V=V_1\oplus V_2
\eeq
where $\pi : \cC \to M$ is the universal curve, $\cC_E \subset \cC$ is the universal family of minimal genus 1 subcurves, $\pi_E : \cC_E \to M $ is the projection morphism, and $f_E : \cC_E \to \PP^n$ is the universal morphism. They are vector bundles (locally free sheaves) on $M$ (c.f. Proposition \ref{loc}). Let $\widetilde{V}_1:=\gamma^*V_1$ and $\widetilde{V}_2:=\gamma^*V_2$. Then $H^1(\mathbb{E}_{\cY/\fM_{1,1}} |_{\cY_{\gst}} )=\widetilde{V}:=\widetilde{V}_1\oplus \widetilde{V}_2$. Then the cosection $\ti\xi'=(\ti\xi'_1,\ti\xi'_2)$ induces the cosection $\ti\xi=(\ti\xi_1,\ti\xi_2):\widetilde{V}=\widetilde{V}_1\oplus \widetilde{V}_2\lra \sO_{W}$.

Following \cite[Proposition 3.2]{L}, the non-surjective locus
$D(\ti\xi)$  of $\ti\xi=\si|_{\cY\lgst}$ is
$$D(\si)\times_{\cX}M=Q_{1,1}(X,d)\times_{Q_{1,1}(\PP^n,d)}M,
$$
which is proper.
Let
$$\widetilde{\VV}_1=h^1/h^0(\mathbb{E}\dual_{\cY/\fM_{1,1}} |_{\cX_{\gst}} ),\ \
\widetilde{\VV}_2=h^1/h^0(\oplus_iR\bul\pi_{W\ast}\sP^i_{W}),\ \  \widetilde{\VV}=\widetilde{\VV}_1 \times_W \widetilde{\VV}_2
$$
be the vector bundle stacks.
Then there is a canonical morphism $\rho_j: \widetilde{\VV}_j\to \widetilde{V}_j$ from the bundle stack to its coarse moduli space, for $j=1,2$.
Note that both $\rho_j$ are proper morphisms.

By the base change property of the $h^1/h^0$-construction, and by the definition of $\mathfrak{C}_{\gst}$, we have 
$$[\mathfrak{C}_{\gst}]\in Z\lsta \widetilde{\VV};\quad \widetilde{\VV}=h^1/h^0(\EE\dual_{\cY/\fM_{1,1}}|_{W}).
$$
Let $C\lgst$ be the coarse moduli of $\mathfrak{C}_{\gst}$ relative to $W$,
thus $C\lgst\sub \widetilde{V}$ since $\widetilde{V}$ is the coarse moduli of
$\widetilde{\VV}$.  Further, since the projection $\rho\defeq \rho_1 \times_W \rho_2: \widetilde{\VV}\to \widetilde{V}$
is smooth, we have an identity of cycles
$\rho^\ast[C\lgst]= [\mathfrak{C}_{\gst}]\in Z\lsta \widetilde{\VV}$. Finally, because $[\mathfrak{C}_{\gst}]\in Z\lsta \widetilde{\VV}(\si)$, we have
$$[C\lgst]\in Z\lsta \widetilde{V}(\ti\xi).
$$
Therefore we have the following identity.

\begin{prop}\label{prop-a}\cite[Proposition 6.3]{CL3}
$$0^!_{\si,\loc}[\mathfrak{C}_{\gst}]=0^!_{\ti \xi,\loc}[C\lgst]\in A\lsta D(\ti \xi). 
$$
\end{prop}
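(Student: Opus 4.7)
The plan is to reduce the equality to the compatibility of the cosection-localized Gysin map with the smooth projection $\rho = \rho_1 \times \rho_2 : \ti\bV \to \ti V$. Two of the three necessary ingredients are already in place just before the statement. First, since $\ti V_i$ is the coarse moduli space of the vector-bundle stack $\ti\bV_i$ relative to $W$ and the projection $\rho$ is smooth, the cycle identity $\rho^{*}[C\lgst] = [\bcgst]$ holds by construction in $Z_{*}\ti\bV(\si)$. Second, the degeneracy loci coincide, $D(\si) = D(\ti\xi)$ as noted in \eqref{deg-locus}, so that the two sides of the asserted identity naturally live in the same Chow group $A_{*}D(\ti\xi)$.

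The central step is to check the compatibility between the two cosections under $\rho$, namely that $\si|_W : \Ob_{\cY/\fD_{1,1}}|_W \to \sO_W$, viewed as a map on $\ti\bV$, descends to the linear functional $\ti\xi : \ti V \to \sO_W$ of \eqref{xi-2}. This amounts to unwinding the construction of $\si$ from \cite{L}: over $W = \cY\lgst$ the relevant obstructions split into an $\sL_W^{\oplus(n+1)}$-piece, which is paired with $\partial q / \partial u_i$ and with the tautological $p$-field section $\epsilon \in \Gamma(W, \gamma^{*}\cL)$, and a $\sP_W$-piece, which is paired with $q(u)$. In both cases the pairing factors through the $h^1$-piece of $R\pi_{W*}(\cdot)$, equivalently through $\gamma^{*}V_1$ and $\gamma^{*}V_2$, because the defining formula is $\sO$-linear in the obstruction class. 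Comparing with the explicit formulas for $\xi_1, \xi_2$ and pulling back by $\gamma$ then identifies the descent of $\si$ with $\ti\xi$.

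With the smoothness of $\rho$, the cycle identity $\rho^{*}[C\lgst] = [\bcgst]$, and the cosection compatibility in hand, the desired equality
$$0^!_{\si,\loc}[\bcgst] = 0^!_{\ti\xi,\loc}[C\lgst] \in A_{*}D(\ti\xi)$$
reduces to the smooth pullback functoriality of the Kiem--Li localized Gysin map: for a smooth morphism $\pi : \mathbf{E} \to E$ of (vector bundle) cone stacks equipped with compatible cosections, one has $0^!_{\pi^{*}\eta,\loc} \circ \pi^{*} = 0^!_{\eta,\loc}$ on localized cycle classes. The main anticipated obstacle is precisely the cosection-compatibility verification, which requires carefully tracking $\si$ through the $h^1/h^0$-stack construction and confirming that the $h^0$-contribution sits in the kernel of the downstairs map; once this is established, the argument follows the template of \cite[Proposition 6.3]{CL3}.
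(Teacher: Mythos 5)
Your proposal follows essentially the same route as the paper: the paper likewise assembles the cycle identity $\rho^\ast[C\lgst]=[\bcgst]$ from the smoothness of $\rho:\ti\bV\to\ti V$, the identification $\ti\xi=\si|_{\cY\lgst}$ of \eqref{xi-2} together with $D(\si)=D(\ti\xi)$, and then invokes the smooth-pullback compatibility of the cosection-localized Gysin map as in \cite[Proposition 6.3]{CL3}. The only cosmetic difference is that the cosection compatibility you single out as the central step is already recorded in the paper as \eqref{xi-2} before the proposition is stated.
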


Now we calculate the cycle $0^!_{\ti \xi,\loc}[C\lgst]$. We first introduce the following notations
\begin{itemize}
\item[$\diamond$]
$\overline{W} := \PP( \cL \oplus \cO_M)$ be a completion of $W = \cY\lgst$,
\item[$\diamond$]
$\bar{\gamma} : \overline{W} \to \cX\lgst$ be the projection morphisms,
\item[$\diamond$]
Let $D_{\infty} := \PP(\cL \oplus 0) \subset \overline{W}$ be the divisor at infinity,
\item[$\diamond$]
$\overline{V}_1 := \overline{\gamma}^*V_1(-D_{\infty})$, $\overline{V}_2:= \bar{\gamma}^*V_2$, $\overline{V} := \overline{V}_1\oplus \overline{V}_2$,
\item[$\diamond$]
$\bar{\xi}_1 : \overline{V}_1 \to \cO_{\overline{W}}$ and $\bar{\xi}_2 : \overline{V}_2 \to \cO_{\overline{W}}$ are cosections induced from $\ti\xi_1$ and $\ti\xi_2$ respectively,
\item[$\diamond$]
$\bar{\xi} := \bar{\xi}_1 \oplus \bar{\xi}_2$, $\bar{\xi} : \overline{V} \to \cO_{\overline{W}}$.
\end{itemize}


To calculate $\Gysin{\bar{\xi}, \loc}[C\lgst]$, we approximate the cone $C\lgst$ as a subvector bundle of $\overline{V}_1$. To do this, we consider $R := C_{ C_{b,\gst} / C\lgst }$ where $C_{b,\gst} := C\lgst \cap \Tot(0 \oplus \widetilde{V}_2)$. It is a deformation of $C_{\gst}$.

We can easily check that $R$ is embedded in $\Tot(\widetilde{V})$ and $[C\lgst] = [R]$ in $A_*(\widetilde{V}(\ti \xi))$.
Next we investigate the cone $R$ and its completion $\bar{R}$ in $\Tot(\bar{V})$. Similar to \cite[(4.5)]{LO2}, by using a local computation we can check
\[
C_{b,\gst} \subset 0_{\cX\lgst} \cup \gamma^* F
\]
where $0_{\cX\lgst} \subset \cY \lgst = \Tot(\cL)$ is the zero section of the bundle $\cL$, $\Delta_{\cX} := \cX\lgst \cap \cX _{\rede}$ and $F$ is a rank $m$ subbundle of $V_2 |_{\Delta_{\cX}}$ defined in the below.

Recall the quasi-isomorphism
\begin{align*}
\oplus_{i=1}^mR^\bullet\pi_{\cX\ast}\sP^i_{\cX} \stackrel{\mathrm{loc} }{\simeq} \, \bigoplus_{i=1}^m\bigg(\left[ \cO_{\cX} \stackrel{\times t}{\longrightarrow} \cO_{\cX} \right]\oplus \left[ 0 \longrightarrow \cO_{\cX}^{\oplus d \cdot \deg q_i} \right]\bigg),
\end{align*}
we observe that $\left. H^1 \left( \oplus_{i=1}^m R^\bullet\pi_{\cX\ast}\sP^i_{\cX} \right)_{\mathrm{tor} } \right|_{M}$ is a rank $m$ subbundle of $V_2$.
Then we define
$F := \left. H^1 \left( \oplus_{i=1}^mR^\bullet\pi_{\cX\ast}\sP^i_{\cX} \right)_{\mathrm{tor} } \right|_{\Delta_{\cX}} \subset V_2|_{\Delta_{\cX}}.
$

Since $R$ is a cone over $C_{b,\gst} \subset \Tot(\ti V_2)$, we can write
\begin{align*}
[R] = [R_1] + [R_2] \in A_*\left(\Tot(\wtil{V} ) \right)
\end{align*}
where $R_1 := R|_{0_{\cX\lgst}}$ and $[R_2]$ is a cycle supported on $\Tot(\gamma^* F)$.
Parallel to \cite[Lemma 8.1]{CL3} and \cite[p. 24]{LO}, we can check that
\begin{align*}
\Gysin{\bar{\xi},\loc}[R_2] = 0
\end{align*}
since $\dim \Tot(\gamma^* F)$ is smaller than the degree of $[R_2] \in A_*(\Tot(\wtil{V}))$.

Hence we have
\begin{align}\label{eq:locGysin1}
\Gysin{\bar{\xi},\loc}[C\lgst] = \Gysin{\bar{\xi},\loc}[R] = \Gysin{\bar{\xi},\loc}[R_1].
\end{align}
Moreover, by \cite[Proposition 5.3]{LO}, we have
\begin{align}\label{eq:Gysinmapcomp1}
\Gysin{\tilde{\xi}, \loc }[R_1] = \bar{\gamma}_* \Gysin{\bar{\xi}_2,\loc} \cdot \Gysin{\bar{V}_1}[\overline{R}_1]
\end{align}
where $\overline{R}_1 $ is the closure of $R_1$ in $\Tot(\overline{V} )$.

Next we investigate the cone $R_1$. Using a local computation of $R_1$ similar to \cite[(4.8),(4,9)]{LO2}, we conclude that $R_1$ is of the form $R_1 = \gamma^* R_1'$. Here $R_1'$ is given as the closure of the image of the natural composition morphism
\begin{align*}
\varphi : \mathfrak{g}_{\cX_{\gst}}^*N_{\fM_{\gst}/\fM_{1,1}} \cong C_{\cX_{\gst}/\fM_{1,1}} \to C_{\cX / \fM_{1,1}}|_{\cX\lgst} \hookrightarrow V.
\end{align*}
\footnote{Caution : $\varphi$ is similarly defined as $\phi : C_{\cY_{\gst} / \fM_{1,1} }$. But it is slightly different.}


Similar to the local description of $\phi$ in \eqref{localmorph}, we can locally describe $\varphi$ locally as follows:
$$
\varphi|_{\cU'} : 1 \mapsto (w_1,\dots,w_n,0,\dots,0)
$$
over some sufficiently small neighbourhood $\cU' \subset \cX$. From this, we observe the degeneracy locus of the morphism $\varphi$ is $\Delta_{\cX}$. To resolve this, we consider the blow-up
\[
\hat{M} : = \mathrm{Bl}_{\Delta_{\cX}} M, \ p : \hat{M} \to M.
\]
Let $E$ be the exceptional divisor. Then there is an induced morphism
\[
\hat{\varphi} : \left( p^* \mathfrak{g}_{\cX_{\gst}}^*N_{\fM_{\gst}/\fM_{1,1}} \right)(E) \to p^* V
\]
which is an injective morphism of vector bundles. Thus its image $\image(\hat{\varphi})$ is a line subbundle of $p^* \bar{V}$. Then we have
\[
p( \Tot( \image(\hat{\varphi}) ) ) = R_1'.
\]
There is the following induced morphism
\begin{align*}
\bar{\varphi} : \left( \hat{\gamma}^* \left( p^* \mathfrak{g}_{\cX_{\gst}}^*N_{\fM_{\gst}/\fM_{1,1}} \right)(E) \right)(-q^*D_\infty) \to q^* \bar{V}
\end{align*}
where $q : \hat{ W } := \overline{W} \times_{M} \hat{M} \to \overline{W} $ is the projection, $\hat{\gamma} : \hat{W} \to \hat{M}$ is the projection. Note that $\bar{\varphi}$ is an injective morphism of vector bundles.
We have
\[
q(\Tot(\image(\bar{\varphi}))) = \overline{R}_1.
\]
Then we obtain
\begin{align*}
\Gysin{\bar{V}_1}[\overline{R}_1] =
q_* \Gysin{q^*\bar{V}}[\Tot(\image(\bar{\varphi}))] = q_*\left(
 c_{\mathrm{top}}(q^*\bar{V} / \image(\bar{\varphi})) \cap [\hat{M}] \right).
\end{align*}
Hence, by combining the above computation with \eqref{eq:locGysin1} and \eqref{eq:Gysinmapcomp1}, we have
\begin{align*}
\Gysin{\bar{\xi},\loc}[C\lgst] = \bar{\gamma}_* \Gysin{\bar{\xi}_2,\loc} \cdot \Gysin{\bar{V}_1}[\overline{R}_1] =
\Gysin{\xi_2,\loc} \left( \bar{\gamma}_*\Gysin{\bar{V}_1}[\overline{R}_1] \right) = \Gysin{\xi_2,\loc} \left( \bar{\gamma}_* q_*\left(
 c_{\mathrm{top}}(q^*\bar{V} / \image(\bar{\varphi})) \cap [\hat{M}] \right) \right)
\end{align*}
where the second equality comes form the functorial property of localized Gysin homomorphisms \cite{KL}.

By using \cite[Lemma 4.2]{LO2} and \cite[(4.13)]{LO2}, we have
\begin{align*}
\bar{\gamma}_* q_*\left(
 c_{\mathrm{top}}(q^*\bar{V} / \image(\bar{\varphi})) \cap [\hat{M}] \right) = \left( \frac{ c(V_1)s(\cL\dual) }{c( \mathfrak{g}_{\cX_{\gst}}^*N_{\fM_{\gst}/\fM_{1,1}}  )} \right)_{\rank V_1 - m-1} \cap [M].
\end{align*}
Therefore we have
\begin{align*}
\Gysin{\bar{\xi},\loc}[C\lgst] = \left( \frac{ c(V_1)s(\cL\dual) }{c( \mathfrak{g}_{\cX_{\gst}}^*N_{\fM_{\gst}/\fM_{1,1}}  )} \right)_{\rank V_1 - m-1} \cap \, \Gysin{\xi_2, \loc}[M].
\end{align*}
Note that $M$ is considered as a substack of $\Tot(V_2)$ embedded by the zero section.

\bigskip

Next, consider the node-identifying morphism
\begin{align*}
\iota : \overline{M}_{1,1} \times Q_{0,2}(\PP^n,d) & \to Q_{1,1}(\PP^n,d) = \cX
\end{align*}

Let $\cH$ be the Hodge bundle over $\overline{M}_{1,1}$, $L_1$ be the universal tangent bundle over $\overline{M}_{1,1}$ at the marked point, $L_2$ be the universal tangent bundle over $Q_{0,2}(\PP^n,d)^p$ at the second marked point, which comes from splitting of the node. We have $\cH \dual \cong L_1$.
Moreover we have
\begin{itemize}
\item[$\diamond$]
$\iota^*V_1 \cong \cH \boxtimes ev_2^* T_{\PP^n}$,
\item[$\diamond$]
$\iota^* \cL\dual \cong \cH \boxtimes (\oplus_i ev_2^* \cO_{\PP^n}(\deg q_i) )$,
\item[$\diamond$]
$\iota^*\mathfrak{g}_{\cX_{\gst}}^*N_{\fM_{\gst}/\fM_{1,1}} \cong \cH\dual \boxtimes L_2 $,
\item[$\diamond$]
$\iota^{-1}[M] = \overline{M}_{1,1} \times Q_{0,2}(\PP^n,d)$,
\item[$\diamond$]
$\Gysin{\iota^*\xi_2,\loc}([\overline{M}_{1,1}]\times [Q_{0,2}(\PP^n,d)]) = [\overline{M}_{1,1}] \times [Q_{0,2}(X,d)]\virt$.
\end{itemize}

Thus we have
\begin{align*}
\Gysin{\bar{\xi},\loc}[C\lgst] & = \left( \frac{ c(V_1)s(\cL\dual) }{c( \mathfrak{g}_{\cX_{\gst}}^*N_{\fM_{\gst}/\fM_{1,1}}  )} \right)_{\rank V_1 - m-1} \cap \Gysin{\xi_2, \loc}[M] \\
& =  (-1)^{(\sum_i\deg q_i)d}\iota_* \left( \frac{c(\cH\dual \boxtimes ev_2^* T_{\PP^n})s(\cH\dual \boxtimes ev_2^*(  \oplus_i\cO_{\PP^n}(\deg q_i) ) ) }{c(\cH\dual \boxtimes L_2) } \right)_{\rank V_1 - m-1}\\
 &\cap \left( [\overline{M}_{1,1}] \times [Q_{0,2}(X,d)]\virt \right) \\
& = (-1)^{(\sum_i\deg q_i)d}\iota_* \left( \frac{c(\cH\dual \boxtimes ev_2^* T_X)  }{c(\cH\dual \boxtimes L_2) } \right)_{\rank V_1 - m-1} \cap \left( [\overline{M}_{1,1}] \times [Q_{0,2}(X,d)]\virt \right).
\end{align*}
where the last identity comes from the short exact sequence $0 \to T_X \to T_{\PP^n}|_X \to  \oplus_i\cO_{\PP^n}(\deg q_i)|_X \to 0$.
Let us define
\[
A^{\rede }_{1,d} := (-1)^{(\sum_i\deg q_i)d} \, 0_{\si,\loc}^![\mathfrak{C}_{\rede}].
\]
We will call it the virtual cycle for reduced quasi-map invariants. We set
\[
N_{\rede} := \pi_* (\oplus_{i=1}^m \sL_{\cY}^{\deg q_i} |_{\cY_{\rede}} ) = \pi_* (\oplus_{i=1}^m \sL_{\cY}^{\deg q_i} |_{\cX_{\rede}} )
\]
for the universal curve $\pi:=\pi_{\cY}|_{\cY_{\rede}} : \cC_{\cY}|_{ \cY_{\rede}} \to \cY_{\rede}$. Then by Theorem \ref{main}, $N_{\rede}$ is a vector bundle.

In the same manner as in \cite[Section 4.3]{LO2} we can show that
\begin{eqnarray}\label{red-def}
A^{\rede }_{1,d}&=& (-1)^{(\sum_i\deg q_i)d} \,0_{N_{\rede}\dual,s\dual}^![\cY_{\rede}]\in A_{*}(Q_{1,1}(X,d)).
\end{eqnarray}
where $s$ is the natural section $s : \cO_{\cY_{\rede}} \to N_{\rede}$ which is induced from the defining equations $q_1,\dots q_m$ of $X \subset \PP^n$. Let $e^{\mathrm{ref}}(N_{\rede})$ be the refined euler class localized by the section $s$. Note that we have
\[
0^!_{N_{\rede}\dual,s\dual}[\cY_{\rede}] = (-1)^{\rank(N_{\rede})} e^{\mathrm{ref}}(N_{\rede})[\cY_{\rede}] = (-1)^{(-1)^{(\sum_i\deg q_i)d}} e^{\mathrm{ref}}(N_{\rede})[\cY_{\rede}]
\]

By the proof in \cite[Section 5]{CL3}, we have
\begin{eqnarray}\label{form-2}
A^{\rede }_{1,d}&=& (-1)^{(\sum_i\deg q_i)d} \iota_{1*}0_{N_{\rede},s\dual}^![\cY_{\rede}]\\
&=& e^{\mathrm{ref}}\left( \oplus_{i=1}^m\pi_{\cX*}\sL^{\otimes \deg q_i}_{\cX}|_{\cX_{\rede}} \right)\cap \cX_{\rede}.\nonumber
\end{eqnarray}
In summary, we obtain the following
\begin{eqnarray}\label{form-1}
&&[Q_{1,1}(X,d))]^{\virt}\\
&=&(-1)^{(\sum_i\deg q_i)d}[\cY]_{\loc}\virt\nonumber\\
&=&(-1)^{(\sum_i\deg q_i)d}\bigg(0_{\si,\loc}^![\mathfrak{C}\lpri]+\Gysin{\bar{\xi},\loc}[C\lgst]\bigg)\nonumber\\
&=& A^{\rede }_{1,d}+ \nonumber\\
&&\iota_* \left( \frac{c(\cH\dual \boxtimes ev_2^* T_X)  }{c(\cH\dual \boxtimes L_2) } \right)_{n - m-1} \cap \left( \, [\overline{M}_{1,1}] \times [Q_{0,2}(X,d)]\virt \, \right.\nonumber
\end{eqnarray}
where $\iota : \bar{M}_{1,1} \times Q_{0,2}(X,d) \to Q_{1,1}(X,d)$ is the node-identifying morphism. It proves the main Theorem \ref{main-0}.

\subsection{Proof of the Corollary \ref{coro-1}}

Let $X\subset \PP^n$ be a complete intersection with dimension $n-m$, then the virtual dimension
\begin{eqnarray*}\text{vdim}\, Q_{g,k}(X,d)&=&\int_{d[\PP^1]}c_1(T_X)+(1-g)(n-m-3)+k.
\end{eqnarray*}
Let $p_1:\overline{M}_{1,1} \times Q_{0,2}(X,d)\to \overline{M}_{1,1}$ and $p_2:\overline{M}_{1,1} \times Q_{0,2}(X,d)\to Q_{0,2}(X,d)$ be the two projections.
\begin{eqnarray*}
c(\cH\dual \boxtimes ev_2^* T_X)&=&\sum_{r=0}^{n-m}\sum_{i=0}^{r}p_1^*c_1(\cH^{\vee})^{r-i}p_2^*c_i(ev_2^* T_X)\\
&=&1+p_1^*c_1(\cH^{\vee})\bigg(\sum_{i=0}^{n-m-1}p_2^*c_i(ev_2^* T_X)\bigg)+\cdots,
\end{eqnarray*}
where $\cdots$ are the terms such that they contain factor of $c^i_1(\cH^{\vee})$ with $i>1$.
and
\begin{eqnarray*}
\frac{1}{c(\cH\dual \boxtimes L_2)}&=&1+\sum_{i\ge1}(-1)^i(p_1^*c_1(\cH^{\vee})+p_2^*c_1(L_2))^i\\
&=&1+\sum_{i\ge1}(-1)^i\binom{i}{1}p_1^*c_1(\cH^{\vee})p_2^*c_1(L_2)^{i-1}+\cdots.
\end{eqnarray*}

\begin{eqnarray*}
\frac{c(\cH\dual \boxtimes ev_2^* T_X)}{c(\cH\dual \boxtimes L_2)}&=&\bigg(1+p_1^*c_1(\cH^{\vee})\bigg(\sum_{i=0}^{n-m-1}p_2^*c_i(ev_2^* T_X)\bigg)+\cdots\bigg)\\
&&\bigg(1+\sum_{i\ge1}(-1)^i\binom{i}{1}p_1^*c_1(\cH^{\vee})p_2^*c_1(L_2)^{i-1}+\cdots\bigg)\\
&=&1+p_1^*c_1(\cH^{\vee})\bigg(\sum_{i=0}^{n-m-1}p_2^*c_i(ev_2^* T_X)+\sum_{i\ge1}(-1)^i\binom{i}{1}p_2^*c_1(L_2)^{i-1}\bigg)+\cdots.
\end{eqnarray*}

Let $\psi_i$ be the psi class, which is the first Chern class of the universal cotangent line bundle for the $i$-th marking. Let $\gamma \in H^{2k}(X,\QQ)
$ be a cohomology class such that $k \le 1$, and let $a$ be an integer satisfies $a+k=\text{vdim}\, Q_{1,1}(X,d)$. By formula (\ref{form-1}) and (\ref{form-2}), we have the following formula for stable quasimap invariants

\begin{eqnarray*}
\langle\psi^{a}ev^*\gamma \rangle_{1,1,d}&=&\int_{A^{\rede }_{1,d}}\psi^{a}ev^*\gamma+\int_{\Gysin{\bar{\xi},\loc}[C\lgst]}\psi^{a}ev^*\gamma \\
&=&\langle\psi^{a}ev^*\gamma\rangle^{\rede}_{1,1,d}+\int_{\overline{M}_{1,1}}c_1(\cH^{\vee})\bigg(\int_{Q_{0,2}(X,d)}\psi^{a}ev^*\gamma \, c_{n-m-2}(ev_2^* T_X)\nonumber\\
&&+(-1)^{n-m-1}\binom{n-m-1}{1}\int_{Q_{0,2}(X,d)}\psi^{a}ev^*\gamma \, c_1(L_2)^{n-m-2}\bigg)\nonumber\\
&=&\langle\psi^{a}ev^*\gamma\rangle^{\rede}_{1,1,d}-\frac{1}{24}\bigg(\int_{Q_{0,2}(X,d)}\psi^{a}ev^*\gamma \, c_{n-m-2}(ev_2^* T_X)\nonumber\\
&&-(n-m-1)\int_{[{Q_{0,2}(X,d)}]\virt} \psi^{a}ev^*\gamma \, \psi_2^{n-m-2}\bigg).\nonumber
\end{eqnarray*}
Here we denoted $c_1(\cH\dual) = \psi$.
If $X$ is a Calabi-Yau threefold, then $c_1(T_X)=0$, and $n-m=3$. So we obtain
\begin{eqnarray}
\langle\psi^{a}ev^*\gamma\rangle_{1,1,d}&=&\langle\psi^{a}ev^*\gamma\rangle^{\rede}_{1,1,d}+\frac{1}{12}\int_{[{Q_{0,2}(X,d)}]\virt}\psi^{a}ev^*\gamma \, \psi_2. \nonumber
\end{eqnarray}


\bibliographystyle{amsplain}

\end{document}